\definecolor{citecolour}{rgb}{0.0, 0.0, 0.8}
\colorlet{linkcolour}{green!50!black}
\newtheorem{prevtheorem}{Theorem}
\newtheorem{theorem}{Theorem}
\newtheorem{proposition}{Proposition}
\newtheorem{remark}{Remark}
\newtheorem{lemma}[theorem]{Lemma}
\newtheorem{corollary}{Corollary}
\newtheorem{conjecture}{Conjecture}
\DeclareMathOperator{\adj}{adj}
\newenvironment{proofofA}{{\bf {Proof of Theorem \ref{Thm:A}.} }}{\hfill $\blacksquare$ \\}
\newenvironment{proofofB}{{\bf {Proof of Theorem \ref{Thm:B}.} }}{\hfill $\blacksquare$ \\}
\newenvironment{proofofPropA}{{\bf {Proof of \autoref{Prop:Unc-A}.} }}{\hfill $\blacksquare$ \\}
\def\Z{\mathbf{Z}}
\def\C{\mathbf{C}}
\def\N{\mathbf{N}}
\def\Q{\mathbf{Q}}
\def\F{\mathbf{F}}
\def\supp{\mathrm{supp}}
\def\Supp{\mathrm{Supp}}
\begin{document}

\title{ Chebotarev's theorem  for groups of order $pq$  and an uncertainty principle}

\author{Maria Loukaki}
\address{Department of Mathematics \& Applied Mathematics, University of Crete, Greece}
\email{mloukaki@uoc.gr}

\keywords{Chebotarev's Theorem, Principal Matrices, Roots of Unity, Uncertainty Principle}
\subjclass[2020]{15A15, 11R04, 11Z05, 42A99}

\begin{abstract}
Let $p$ be a prime number and $\zeta_p$ a primitive $p$-th root of unity. Chebotarev's theorem states that every square submatrix of the $p \times p$ matrix $(\zeta_p^{ij})_{i,j=0}^{p-1}$ is non-singular. In this  paper we  prove the same for principal  submatrices of $(\zeta_n^{ij})_{i,j=0}^{n-1}$, when  $n=pr$ is the product of two distinct primes, and $p$ is a large enough prime that has order $r-1$ in $\mathbf{Z}_r^*$. As an application,  an uncertainty principle for cyclic groups of order $n$ is established when $n=pr$ as described above.
\end{abstract}

\maketitle
\section{Introduction}
For any integer $ n$, let $ \zeta_n$  denote a primitive $n$-th root of unity. For a prime $p$  and a positive integer $k$, we denote the field with $ p^k $ elements as $\F_{p^k}$.  The matrix $(\zeta_p^{ij})$ with $i,j \in \{ 0, 1, \cdots, p-1\}$ is a Vandermonde matrix with non-zero determinant. It was Chebotarev, in 1926, who first noticed that any square submatrix of the above inherits the same property (see \cite{Lenstra}). He proved 
\begin{theorem}\label{Thm:Chebotarev}
If $I, J \subseteq \{ 0, 1, 2, \cdots p-1 \}$  with $|I|=|J|$ then the matrix 
$(\zeta_p^{ij})_{i\in I, j \in J}$ has non-zero determinant.
\end{theorem}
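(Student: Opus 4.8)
To prove Theorem~\ref{Thm:Chebotarev} I would use the classical reduction modulo $\mathfrak p=(1-\zeta_p)$, the unique prime of $\Z[\zeta_p]$ over $p$; recall that its residue field is $\F_p$ and that for the associated valuation $v$ one has $v(\zeta_p-1)=1$. Set $k:=|I|=|J|$. The cases $k=1$ and $k=p$ (where the matrix is the full Vandermonde matrix $(\zeta_p^{ij})_{i,j=0}^{p-1}$, with determinant $\prod_{i<i'}(\zeta_p^{i'}-\zeta_p^{i})\ne 0$) are immediate, so assume $2\le k\le p-1$ and argue by contradiction. A vanishing determinant produces a nontrivial $\Q(\zeta_p)$-linear relation among the columns, that is, coefficients $c_a$ ($a\in J$), not all zero, with $\sum_{a\in J}c_a\zeta_p^{ia}=0$ for every $i\in I$; after clearing denominators and dividing out a power of $1-\zeta_p$ I may assume each $c_a\in\Z[\zeta_p]$ with at least one $c_a$ a unit at $\mathfrak p$. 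Equivalently, the polynomial $Q(y):=\sum_{a\in J}c_a y^{a}$, which has at most $k$ nonzero terms and degree $\le p-1$, vanishes at the $k$ distinct $p$-th roots of unity $\zeta_p^{i}$, $i\in I$.

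Expanding about $y=1$, write $Q(1+z)=\sum_{t\ge 0}D_t z^{t}$, so that $D_t=\sum_{a\in J}c_a\binom{a}{t}\in\Z[\zeta_p]$. The argument then rests on two observations. First, reduction mod $\mathfrak p$ loses little: $\overline{Q}:=Q\bmod\mathfrak p\in\F_p[y]$ is nonzero, has at most $k$ terms and degree $\le p-1$, and I claim any such polynomial vanishes at $y=1$ to order at most $k-1$. This is a short self-contained lemma --- one can induct on the number of terms, differentiating and using that the order of vanishing, being positive and at most $\deg\overline{Q}<p$, is prime to $p$; equivalently, the generalized Vandermonde matrix $\big(\binom{a}{t}\big)_{0\le t\le k-1,\ a\in J}$ has determinant $\pm\prod_{t}\tfrac{1}{t!}\prod_{a<a'}(a'-a)$, which is a unit at $\mathfrak p$ once $k\le p-1$. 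Consequently some $D_t$ with $t\le k-1$ is a unit at $\mathfrak p$; let $T\le k-1$ be the least such index. Second, for $b\in\{1,\dots,p-1\}$ one has $\zeta_p^{b}-1=(\zeta_p-1)\mu_b$ with $\mu_b\equiv b\pmod{\mathfrak p}$, so that $v(\zeta_p^{b}-1)=1$ and $\mu_b$ is a unit at $\mathfrak p$.

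The crux is then a dominant-term extraction. Put $s:=\min\{\,v(D_t)+t : D_t\ne 0\,\}$; since $v(D_T)=0$ we get $s\le T\le k-1$. Fix $i\in I$ with $i\ne 0$ and start from the identity $\sum_t D_t(\zeta_p^{i}-1)^{t}=0$: writing $D_t=(\zeta_p-1)^{v(D_t)}E_t$ with $E_t$ a unit at $\mathfrak p$, substituting $\zeta_p^{i}-1=(\zeta_p-1)\mu_i$, dividing by $(\zeta_p-1)^{s}$, and reducing mod $\mathfrak p$, every term with $v(D_t)+t>s$ disappears and we are left with $\sum_{t\in U}\overline{E}_t\,i^{\,t}=0$ in $\F_p$, where $U:=\{\,t : D_t\ne 0,\ v(D_t)+t=s\,\}$ is nonempty. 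So the nonzero polynomial $R(X):=\sum_{t\in U}\overline{E}_t X^{t}\in\F_p[X]$, which has degree $\le s\le k-1$, vanishes at every nonzero element of $I$; and if $0\in I$, then the relation for $i=0$ gives $D_0=Q(1)=0$, so $0\notin U$, whence $R$ has no constant term and $R(0)=0$ as well. In either case $R$ has at least $k$ distinct roots in $\F_p$ but degree at most $k-1$, forcing $R\equiv 0$ --- which is absurd since $U\ne\varnothing$ and $\overline{E}_t\ne 0$ for every $t\in U$. The step I expect to demand the most care is precisely this one: keeping the $\mathfrak p$-adic bookkeeping honest so that the surviving polynomial $R$ over $\F_p$ genuinely has degree below $k$ while still detecting all $k$ of the points in $I$; it is the order-of-vanishing bound from the previous paragraph --- the one place where the primality of $p$ enters essentially --- that secures $\deg R\le k-1$.
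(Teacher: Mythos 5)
Your proof is correct. It is worth noting that the paper itself does not prove Theorem~\ref{Thm:Chebotarev}; it records it as background (citing several known proofs) and builds on Frenkel's proof \cite{Frenkel}, reproducing Frenkel's key lemma as Lemma~\ref{Lem:Frenkel} and adapting it in the proofs of Theorems~\ref{Thm:A} and~\ref{Thm:B}. So the natural comparison is to Frenkel's argument.

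Your first half coincides with Frenkel's: normalize so the $c_a\in\Z[\zeta_p]$ are not all in $\mathfrak p=(1-\zeta_p)$, form $Q$, and invoke the bound that a nonzero polynomial over $\F_p$ of degree $<p$ with at most $k$ nonzero terms vanishes at $1$ to order $<k$ (your ``short self-contained lemma'' is exactly Lemma~\ref{Lem:Frenkel}, and your generalized-Vandermonde reformulation is a clean equivalent). Where you depart is the finish. Frenkel observes that, since the $\zeta_p^i$ ($i\in I$) are distinct roots of $Q$ and $\prod_{i\in I}(y-\zeta_p^i)$ is monic in $\Z[\zeta_p][y]$, division gives $\prod_{i\in I}(y-\zeta_p^i)\mid Q$ in $\Z[\zeta_p][y]$; reducing modulo $\mathfrak p$ yields $(y-1)^{|I|}\mid\overline Q$, contradicting the order bound in two lines. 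You instead keep the $|I|$ scalar identities $Q(\zeta_p^i)=0$ separate, expand each around $y=1$, and perform a $\mathfrak p$-adic dominant-term extraction, producing a nonzero polynomial $R\in\F_p[X]$ of degree $\le k-1$ that vanishes at every element of $I$. Your bookkeeping is sound (the bound $\deg R\le s\le T\le k-1$ is correct, and the treatment of $0\in I$ via $D_0=Q(1)=0$ is right), and the argument is self-contained. The trade-off: Frenkel's finish is shorter because the monic product of linear factors transports the multiplicity $|I|$ in one stroke, whereas your route re-derives the same obstruction pointwise through the valuation; what you gain is that you never need to pass the factorization through $\Z[\zeta_p][y]$ --- everything after the lemma is explicit $\mathfrak p$-adic arithmetic on coefficients. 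Both are valid; yours is genuinely different in the second half, and somewhat longer than necessary.

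One small nit: in the inductive sketch of the order-of-vanishing lemma, ``the order of vanishing \dots is prime to $p$'' should read that the order is positive and strictly less than $p$, hence not a multiple of $p$; this is what guarantees differentiation drops the multiplicity by exactly one. You clearly have the right idea, but ``prime to $p$'' is not literally what you use.
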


After Chebotarev's initial proof, several others have emerged in the literature; Dieudonne \cite{Dieudonne}, Evans and Isaacs \cite{EvIs}, Goldstein et al. \cite{Gold},  Tao \cite{Tao}, Frenkel \cite{Frenkel},
 just to mention a few. Several approaches were used but  all require some  algebraic information. Chebotarev's original idea was to show that the 
  determinats of the matrices in question, which are  elements of the $p$-th cyclotomic field $\Q(\zeta_p)$ over $\Q$,  do not vanish in the $p$-adic completion $\Q_p(\zeta_p)$  of $\Q(\zeta_p)$. Tao \cite{Tao} constructs a polynomial in $n$ variables and differentiates it repeatedly to obtain an integer-coefficient polynomial $P$ where $P(1, 1, \cdots , 1)$ is not a multiple of $p$, contradicting his cyclotomic integer lemma. In addition he  proves the equivalence between Chebotarev's theorem and a Fourier uncertainty principle in $\Z_p$. The proof in  \cite{EvIs} employs homogeneous polynomials, and  a theorem by Mitchell \cite{Mitchell}, while  the method used in  \cite{Gold} is much more algebraic and also establish Tao's uncertainty principle in an algebraic context. Finally, Frenkel's method involves a single-variable polynomial in $\Z[\zeta_p][x]$ that reduces to a polynomial in $\Z_p[x]$ with an overly large root multiplicity.

 If $n$ is not a single prime, we still have the Vandermonde matrix $(\zeta_n^{ij})_{0 \leq i, j \leq n-1}$ which is nonsingular, but we can find examples of square submatrices of the original that are singular (see \cite{CL.24}). On the other hand, no counterexample of a singular principal submatrix when $n$ is a square-free integer is known.  (By definition, a principal submatrix  of $(\zeta_n^{ij})_{0 \leq i,j \leq n-1}$ is any submatrix $(\zeta_n^{ij})_{i,j \in I}$ for   some $I \subseteq \{0, 1, \cdots, n-1 \}$).  The following conjecture was first stated in \cite{Cabreli}, with a more general version presented in \cite{CL.24}.
 \begin{conjecture}\label{Conjecture}
 If $n$ is square-free then all principal submatrices of $(\zeta_n^{ij})$ with $i,j \in \{ 0, 1, \cdots, n-1\}$
 have nonzero determinant.     
 \end{conjecture}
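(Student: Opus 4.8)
The plan is to translate Conjecture~\ref{Conjecture} into a discrete uncertainty principle and then prove it by induction on the number of prime factors of $n$, with Chebotarev's theorem (Theorem~\ref{Thm:Chebotarev}) as the base case. Write $\widehat f(t)=\sum_{s\in\Z_n}\zeta_n^{st}f(s)$ for $f\colon\Z_n\to\C$. First I would record the elementary dictionary: if the principal submatrix $(\zeta_n^{ij})_{i,j\in I}$ is singular, a kernel vector extended by zero gives a nonzero $f$ with $\supp f\subseteq I$ and $\widehat f$ vanishing on $I$, hence $\supp f\cap\supp\widehat f=\emptyset$; conversely, given a nonzero $f$ with $\supp f\cap\supp\widehat f=\emptyset$, the principal submatrix on $I=\supp f$ kills the nonzero vector $(f(j))_{j\in I}$. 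Thus Conjecture~\ref{Conjecture} is equivalent to the assertion that for squarefree $n$ no nonzero $f$ on $\Z_n$ has $\supp f\cap\supp\widehat f=\emptyset$, and for $n$ prime this is contained in Theorem~\ref{Thm:Chebotarev}.

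For the inductive step write $n=mp$ with $p$ prime and $\gcd(m,p)=1$, so that $\Z_n\cong\Z_m\times\Z_p$ and the DFT on $\Z_n$ is the tensor product of the DFTs on $\Z_m$ and on $\Z_p$. Regard $f$ as a function $\Z_p\to\C^{\Z_m}$, $a\mapsto f_a:=f(\cdot,a)$; a direct computation gives $\widehat f(\cdot,\eta)=\widehat{h_\eta}$, where $(h_\eta)_{\eta\in\Z_p}$ is the $\Z_p$-DFT of the vector-valued function $(f_a)_a$ and the hat on $h_\eta$ is the $\Z_m$-DFT. The vector-valued form of Tao's uncertainty principle — obtained by composing with linear functionals $\C^{\Z_m}\to\C$ and invoking \cite{Tao} on $\Z_p$ — yields $\bigl|\{a:f_a\neq 0\}\bigr|+\bigl|\{\eta:\widehat f(\cdot,\eta)\neq 0\}\bigr|\ge p+1$, so these two subsets of $\Z_p$ overlap in some coordinate $c$, whence $f_c\neq 0$ and $\widehat f(\cdot,c)\neq 0$ are both nonzero functions on $\Z_m$. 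Exploiting this in both directions (also transforming in the $\Z_m$-coordinate), one hopes to produce, on some coset of $\Z_m$ inside $\Z_n$, a nonzero restricted function whose support is disjoint from that of its $\Z_m$-transform, contradicting the inductive hypothesis. The two-prime case $n=pr$ established in this paper is precisely the first nontrivial instance of this descent, with the remaining modulus prime; the hypotheses on $p$ (primitive root modulo $r$, sufficiently large) are exactly what make that single step run cleanly.

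The main obstacle is that for composite $n$ there is no replacement for the \emph{additive} uncertainty principle $|\supp f|+|\supp\widehat f|\ge n+1$ that makes Chebotarev transparent for primes: one genuinely can have $|\supp f|+|\supp\widehat f|\le n$, so $\supp f\cap\supp\widehat f=\emptyset$ is not excluded by cardinality alone, and the coupling between a fibre $f_c$ and the corresponding fibre $\widehat f(\cdot,c)$ is nonlocal, since $\widehat f(\cdot,c)$ depends on all the $f_a$. What one expects instead is a \emph{structural} uncertainty principle asserting that a disjoint pair $A=\supp f$, $B=\supp\widehat f$ in a squarefree $\Z_n$ must be aligned with the subgroup lattice $\{\prod_{i\in S}\Z_{p_i}\}$, after which the problem collapses onto a proper divisor of $n$ and the induction closes; isolating and proving this alignment — equivalently, determining which coset-intersection profiles of $A$ and $B$ can occur — is the crux, and is exactly what a general argument currently lacks. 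An alternative, arithmetic route is to imitate the Galois/reduction proofs of Chebotarev (Tao~\cite{Tao}, Evans--Isaacs~\cite{EvIs}): the determinant lies in $\Z[\zeta_n]$, and its vanishing, transported to an identity between integer polynomials, should contradict an order-of-vanishing estimate at a suitable root of unity after reducing modulo a prime above each $p_i\mid n$. Here the difficulty is that for composite $n$ the prime $p_i$ is ramified in $\Z[\zeta_n]$ with residue degree equal to the order of $p_i$ in $(\Z/(n/p_i)\Z)^{*}$, so the clean factorisation $\Phi_p\equiv(x-1)^{p-1}\pmod p$ is replaced by something that must be controlled prime by prime — which is again the role played by the order hypothesis on $p$ in the theorem proved here.
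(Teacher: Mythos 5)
First, a point of order: the statement you were asked to prove is Conjecture~\ref{Conjecture}, which the paper explicitly leaves open. The paper proves only special cases --- Theorem~\ref{Thm:A} for $n=2p$ and Theorem~\ref{Thm:B} for $n=pr$ under the extra hypotheses that $p$ is primitive modulo $r$ and $p>\Gamma_r$ --- so there is no proof in the paper to match yours against, and any complete argument would be new. Your proposal is not such an argument: it is a plan with a gap that you yourself flag, and the gap is real. The reduction of the conjecture to the statement ``no nonzero $f$ on $\Z_n$ has $\supp f\cap\supp\widehat f=\emptyset$'' is correct and standard, as is the base case via Theorem~\ref{Thm:Chebotarev}. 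But the inductive step fails where you say it does: writing $\Z_n\cong\Z_m\times\Z_p$ and applying the vector-valued form of Tao's theorem in the $\Z_p$-variable produces a coordinate $c$ with $f_c\neq 0$ and $\widehat f(\cdot,c)\neq 0$, yet $\widehat f(\cdot,c)$ is the $\Z_m$-transform of $h_c=\sum_a\zeta_p^{ac}f_a$, not of $f_c$. Disjointness of $\supp f$ and $\supp\widehat f$ in $\Z_n$ therefore does not descend to disjointness of $\supp f_c$ and $\supp\widehat{f_c}$ in $\Z_m$, and the induction does not close. The ``structural uncertainty principle'' you would need (that disjoint support pairs must align with the subgroup lattice) is precisely the unknown content of the conjecture, so invoking it is circular.

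Your second, arithmetic route is essentially what the paper actually does, and it is instructive to see why it too stops short of the conjecture. Following Frenkel, the paper encodes a putative kernel vector as a polynomial $r(x)=\sum_{j\in J}z_jx^j$ over $\Z[\zeta_n]$, reduces modulo the ideal $\langle 1-\zeta_p\rangle$, and uses Lemma~\ref{Lem:Frenkel} to bound the order of vanishing at $\bar 1$ by the support size. For this to work one needs $\langle 1-\zeta_p\rangle$ to be a \emph{prime} ideal of $\Z[\zeta_n]$ (Lemma~\ref{Lem:MaximalIdeal}), which forces $p$ to have order $\phi(m)$ in $\Z_m^*$ --- already a severe restriction on $n$ --- and one further needs to invert the $r\times r$ matrix $(\bar\omega_r^{tk})$ over the residue field $\F_{p^{r-1}}$, which is exactly Zhang's finite-field Chebotarev theorem (Theorem~\ref{Zhang}) and is where $p>\Gamma_r$ enters. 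So your diagnosis of where the difficulty lives is accurate, but neither of your two routes, as described, yields the conjecture; the first has an unbridged descent step and the second provably requires hypotheses that exclude most square-free $n$.
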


In \cite{Cabreli}, the conjecture was confirmed for all $2 \times 2$ principal submatrices. Even more was actually proved:  for $n \geq 2$, all principal $2 \times 2$ submatrices of $(\zeta_n^{ij})_{0\leq i,j \leq n-1}$ are invertible if and only if $n$ is square-free (see Corollary 2.14 in \cite{Cabreli}).
  In addition to the above,  in \cite{CL.24} it was proved that, for $n >4$,  all principal  $3 \times 3$ submatrices of
 $(\zeta_n^{ij})_{0\leq i,j \leq n-1}$ are invertible if and only if $n$ is square free (see Theorem 1.1 in \cite{CL.24}).
These  results also imply that  any  $(n-2) \times (n-2)$ and any  $(n-3) \times (n-3)$ principal submatrix 
of $(\zeta_n^{ij})_{0\leq i,j \leq n-1}$ is non-singular, see Section \ref{Complementary}.

The truth of Conjecture \ref{Conjecture} has important implications for problems connected to bases of finite dimensional linear spaces. For instance, in \cite{Cabreli}, it is shown that two ordered bases of $\mathbf{C}^n$ are \textit{woven} (this means that we can drop some elements of one basis as long as we replace them with the corresponding elements of the other basis, and so keep the basis property) if and only if all principal minors of the change-of-basis matrix are nonsingular. When the two bases of $\mathbf{C}^n$ in consideration are the usual basis $\{e_1, \ldots, e_n\}$ and the Fourier basis $\{u_i = (1, \zeta_n^i, \zeta_n^{2i}, \ldots, \zeta_n^{(n-1)i}), i=0, 1, \ldots, n-1\}$ the validity of Conjecture \ref{Conjecture} for the dimension $n$ implies that for any subset $I$ of $\{0, 1, \ldots, n-1\}$ a vector $x \in \mathbf{C}^n$ can be reconstructed by the values $x_i$, $i \in I$, and $\widehat{x}_j$, $j \notin I$. Here $\widehat{x}$ is the Fourier Transform of the vector $x$ (in the cyclic group of order $n$) or, in other words, the coefficients of $x$ written in the Fourier basis $\{u_i\}$.

If $n=p r$  with $p,r$ distinct primes,     then $\Z_p \times \Z_r \cong \Z_n$ under the group  isomorphism that sends 
\begin{equation}\label{eq:isomo}
   \Z_p \times \Z_r \ni (i_p, \, i_r) \longrightarrow   i \in   \Z_n,
\end{equation}
 with   $i \equiv i_p \cdot r +i_r \cdot p  \pmod{ pr}$.  So $i \equiv i_p r \pmod p$ and $i \equiv i_r p \pmod r$. 
 
  [The map above  is easily seen to be a group homomorphism. Since the groups have the same cardinality, it suffices to show that its kernel is trivial to prove it is an isomorphism. If $(a, b) \in \Z_p \times \Z_r$ is such that $ra + bp \equiv 0 \pmod{pr}$, then $p \mid ra$ and $r \mid bp$. Since $p$ and $r$ are coprime, it follows that $p \mid a$ and $r \mid b$ and thus $(a,b) = (0,0)$ as desired.]

Assume now that a primitive $n$-th root $\zeta_n$ of $1$ is fixed. Using the isomorphism above, for every $i \in \Z_n$,  we have 
\begin{equation}\label{eq:zn^i}
\zeta_n^i = (\zeta_n^r)^{i_p} \cdot (\zeta_n^p)^{i_r}= \zeta_p^{i_p} \cdot \zeta_r^{i_r}, 
\end{equation}
where $\zeta_p = \zeta_n^r$  and $\zeta_r = \zeta_n^p$ are  primitive $p$-th and $r$-th roots of unity, respectively.
Note also that for every $i, j \in \Z_n$  we  get  
\[
\zeta_n^{ij}= \zeta_p^{ri_pj_p} \cdot \zeta_r^{pi_rj_r}.
\]
If $I \subseteq \Z_n$, (where $n = pr$), we define
\begin{equation}\label{eq:def}
I_r^k = \{ i \in I \mid i_r = k  \}
\end{equation}
for each $k = 0, 1, \ldots, r-1$. 
 Observe that $I = \dot\bigcup_{k}\,  I_r^k$. 
 
In this paper, we heavily use the approach introduced by P.E. Frenkel in \cite{Frenkel}  to partially address \autoref{Conjecture}.  Our first main theorem deals with  the case where \( n=2p \) for some odd prime \( p \).
\begin{prevtheorem}\label{Thm:A}
 Assume that $n=2p$ with $p$ an odd prime and  let   $I, J \subseteq \Z_{n}$ with $|I_2^k| = |J_2^k|$, for  $k=0,1$.  Then the matrix $(\zeta_{n}^{ij})_{i \in I, j\in J}$ has nonzero determinant.    
\end{prevtheorem}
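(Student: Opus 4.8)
The plan is to reduce the $2p \times 2p$ situation to two applications of Chebotarev's theorem (Theorem~\ref{Thm:Chebotarev}) for the prime $p$, using the block structure coming from the isomorphism $\Z_n \cong \Z_2 \times \Z_p$. Write every index $i \in I$ as a pair $(i_2, i_p)$ and split $I = I_2^0 \,\dot\cup\, I_2^1$, and likewise $J = J_2^0 \,\dot\cup\, J_2^1$; by hypothesis $|I_2^0| = |J_2^0| =: a$ and $|I_2^1| = |J_2^1| =: b$. Using the factorization $\zeta_n^{ij} = \zeta_2^{i_2 j_2}\,\zeta_p^{r i_p j_p}$ with $r = 2$ here (so $\zeta_n^{ij} = (-1)^{i_2 j_2}\,\zeta_p^{2 i_p j_p}$, and $\zeta_p^2$ is still a primitive $p$-th root of unity), the matrix $M = (\zeta_n^{ij})_{i\in I, j\in J}$ decomposes into a $2\times 2$ array of blocks indexed by the values of $i_2$ and $j_2$, the $(\epsilon,\delta)$ block being $(-1)^{\epsilon\delta}\,(\zeta_p^{2 i_p j_p})_{i \in I_2^\epsilon,\, j\in J_2^\delta}$. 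So up to sign rows/columns, $M$ has the form $\begin{pmatrix} A & B \\ A' & -B' \end{pmatrix}$ where $A, A'$ share the same column index set (the $i_p$-values of $J_2^0$) and $B, B'$ share the other column index set, while $A,B$ share row indices and $A',B'$ share row indices; all four blocks are submatrices of the single Vandermonde-type matrix $(\zeta_p^{2 i_p j_p})$.

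The key algebraic step is to row-reduce using the second block of columns: add the bottom block-row to the top, or more precisely exploit that the entries of $A$ and $A'$ in a common column agree exactly when the corresponding $i_p$ indices agree. First I would handle the case $a=b$ and the two row-index multisets coincide, where one can literally add and subtract block rows to get a block-triangular matrix whose diagonal blocks are (up to nonzero scalars) square Chebotarev submatrices of $(\zeta_p^{ij})$, hence nonsingular. The general case, where the row index sets of the two parity classes differ and $a \ne b$, is the real content: here I would argue by a Laplace/Cauchy--Binet expansion of $\det M$ along the block of rows with $i_2 = 0$. This expresses $\det M$ as a sum over ways of choosing $a$ columns from the full $n$ columns to pair with the top rows; grouping these choices by how many come from $J_2^0$ versus $J_2^1$, and using that a minor vanishes unless the column count in each parity class matches what the bottom rows need, collapses the sum to the single term where the top rows take all of $J_2^0$ and the bottom rows take all of $J_2^1$ — giving $\pm \det(\zeta_p^{2 i_p j_p})_{I_2^0, J_2^0}\cdot\det(\zeta_p^{2 i_p j_p})_{I_2^1, J_2^1}$, which is a product of two Chebotarev determinants and hence nonzero.

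The main obstacle is precisely controlling that Laplace expansion: a priori many mixed terms (top rows grabbing some columns from $J_2^0$ and some from $J_2^1$) appear, and it is not obvious they cancel or vanish. The trick I expect to use is that the $\pm 1$ signs from the $i_2 j_2$ factor make the bottom-row blocks in columns $J_2^0$ equal to $A'$ but in columns $J_2^1$ equal to $-B'$, so a complementary minor picks up a sign $(-1)^{(\text{number of }J_2^1\text{ columns assigned to top rows})}$; pairing each column-selection with the "swapped" one (move one column between top and bottom) and checking the two contributions cancel unless the selection is the extreme one. This is exactly the mechanism in Frenkel's argument \cite{Frenkel}, adapted to the two-parity-class setting, and making the sign bookkeeping precise — together with verifying the surviving minors are genuinely square (which is where $|I_2^k| = |J_2^k|$ is used) and that $\zeta_p^2$ generates $\mu_p$ so Chebotarev applies — is the crux of the proof. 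A cleaner alternative I would try first is to conjugate $M$ by the order-two Fourier transform on the $\Z_2$-factor (a Hadamard-type $2\times 2$ matrix $\tfrac1{\sqrt2}\begin{pmatrix}1&1\\1&-1\end{pmatrix}$ acting on the parity blocks), which should block-diagonalize the ``$\Z_2$ part'' and reduce the determinant directly to a product of two $p$-Chebotarev determinants up to an explicit power of $2$; if that conjugation can be carried out compatibly with the restriction to $I$ and $J$, it bypasses the messy expansion entirely.
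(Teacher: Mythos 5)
The proposal has a genuine gap at its core. Your central claim is that the Laplace expansion of $\det M$ along the rows with $i_2=0$ ``collapses to the single term'' in which those rows grab exactly the columns $J_2^0$, so that $\det M = \pm \det(\zeta_p^{2i_pj_p})_{I_2^0,J_2^0}\cdot\det(\zeta_p^{2i_pj_p})_{I_2^1,J_2^1}$. That identity is false, and the cross terms do not cancel. Smallest counterexample: $p=3$, $n=6$, $I=J=\{0,1\}$ (so $|I_2^0|=|I_2^1|=1$). The matrix is $\begin{pmatrix}1&1\\1&\zeta_6\end{pmatrix}$ with determinant $\zeta_6-1=\zeta_6^2$, while the two $1\times1$ ``diagonal'' blocks contribute a single term $\pm\zeta_6^4$; the missing $-1$ is precisely the cross term (top row taking the $J_2^1$ column, bottom row the $J_2^0$ column), whose minor does not vanish and does not pair off with anything. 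The sign $(-1)^{i_2j_2}$ gives no mechanism to kill it. The Hadamard-conjugation ``cleaner alternative'' fails for exactly the reason you flag: the $2\times 2$ Fourier transform on the $\Z_2$-factor only block-diagonalizes the full $F_2\otimes F_p'$ matrix, and conjugation does not commute with restricting rows to $I$ and columns to $J$ unless the $i_p$-index sets of $I_2^0$ and $I_2^1$ (and of $J_2^0$ and $J_2^1$) happen to coincide — the generic case is precisely where the tensor structure breaks.

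The paper's actual argument is of a wholly different kind (Frenkel-style). It works on a putative null vector $(z_j)_{j\in J}$ rather than on the determinant directly: clear denominators to put $z_j\in\Z[\zeta_p]$, normalize by the $(1-\zeta_p)$-adic valuation so that some $z_j$ is a unit mod $(1-\zeta_p)$, form the two one-variable specializations $T_0(x)=g(x,1)$, $T_1(x)=g(x,-1)$ of $g(x,y)=\sum_j z_j x^{j_p}y^{j_2}$, reduce mod the prime $\langle 1-\zeta_p\rangle$ to land in $\F_p[x]$, and observe $\bar T_t$ is divisible by $(x-\bar 1)^{|I_2^t|}$. Then $\bar T_0+\bar T_1=2\bar S_0$ has at most $|J_2^0|=|I_2^0|$ nonzero coefficients, so Lemma~\ref{Lem:Frenkel} forces $\bar S_0=\bar 0$ (using $2\not\equiv 0$), and bootstrapping gives $\bar S_1=\bar 0$ too, contradicting the normalization. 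The common insight you share with the paper is the CRT block structure $\zeta_n^{ij}=(-1)^{i_2j_2}\zeta_p^{2i_pj_p}$; but the paper does not attempt a product/cancellation identity for $\det M$ (which, as the example shows, simply does not hold), and instead converts the block structure into a statement about sparse polynomials over $\F_p$ having a bounded order of vanishing at $1$.
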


As principal  submatrices  are exactly those with $I = J$ (in the notation of \autoref{Thm:A}) we get the Corollary that follows. We remark however,  that 
principal submatrices are indeed a special case, as for example in the group $\Z_2 \times \Z_5$ 
the subsets 
$I = \{ (0,1), (0,2), (0,3), (1,3)\}$ and $J=\{ (0,0), (0,1), (0,4), (1,1)\}$ satisfy $|I_2^0| = |J_2^0|=3$ and $|I_2^1| = |J_2^1|=1$. In addition, to describe the isomorphism \eqref{eq:isomo} in this case, note that any element $(a,b) \in \Z_2 \times \Z_5$ maps to $5a +2b \pmod{10}$ in $\Z_{10}$. Consequently, the sets $I$ and $J$ are mapped to the subsets $\{2,4,6,1\}$ and $\{0,2,8,7\}$ of $\Z_{10}$, respectively.
\begin{corollary}\label{Cor:n=2p}
If  $n=2p$,  where $p$ is an odd prime, then every  principal submatrix of $(\zeta_n^{ij})_{0 \leq i,j\leq n-1}$ has non-zero determinant.
\end{corollary}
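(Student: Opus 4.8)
The corollary is the case $I=J$ of Theorem~\ref{Thm:A}, so I describe how I would prove Theorem~\ref{Thm:A}; specialising to $I=J$ then gives the corollary. Fix $n=2p$. Using the isomorphism \eqref{eq:isomo} with $r=2$ one has $\zeta_2=-1$ and $\zeta_n^{ij}=(-1)^{i_2 j_2}\,\omega^{\,i_p j_p}$, where $\omega:=\zeta_n^{2}$ is a primitive $p$-th root of unity. Set $a=|I_2^0|=|J_2^0|$ and $b=|I_2^1|=|J_2^1|$ (the hypothesis $|I_2^k|=|J_2^k|$ is used precisely here). Ordering $I$ so that $I_2^0$ precedes $I_2^1$, and $J$ likewise, the matrix $M:=(\zeta_n^{ij})_{i\in I,\,j\in J}$ acquires the block form $\bigl(\begin{smallmatrix}A&B\\ C&-D\end{smallmatrix}\bigr)$, with $A=(\omega^{i_p j_p})_{i\in I_2^0,\,j\in J_2^0}$ of size $a\times a$, $D=(\omega^{i_p j_p})_{i\in I_2^1,\,j\in J_2^1}$ of size $b\times b$, and $B,C$ the rectangular blocks; every entry is an integral power of $\omega$, and the $\Z_2$-factor contributes only the overall sign $-1$ on the odd--odd block $D$. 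If $a=0$ or $b=0$ then $M$ is, up to sign, a single square submatrix of $(\zeta_p^{ij})$ (after relabelling columns by $2^{-1}\bmod p$), so $\det M\neq0$ by Theorem~\ref{Thm:Chebotarev}; assume henceforth $a,b\ge1$. The one feature that makes $r=2$ special is that $2$ is a unit modulo $p$, so the sign on the $D$-block will turn out to be harmless --- this is why no largeness hypothesis on $p$ is needed here.

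To prove $\det M\neq0$ I would run Frenkel's $\mathfrak p$-adic valuation argument from \cite{Frenkel}. Equip $\Q(\omega)$ with the valuation $v$ normalised by $v(p)=1$, so that $v(\omega-1)=\tfrac{1}{p-1}$. Put $\omega=1+y$ and expand $\omega^{i_p j_p}=\sum_{l\ge0}\binom{i_p j_p}{l}y^{l}$; substituting into the determinant gives $\det M=\sum_{L\ge0}D_L\,y^{L}$ with $D_L\in\Z$, since each $D_L$ is an alternating sum of determinants of integer matrices with entries $\pm\binom{i_p j_p}{l}$. Let $l_0$ be the order of vanishing of $\det M$ at $y=0$. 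For $L>l_0$ one has $v(D_L y^{L})\ge\tfrac{L}{p-1}\ge\tfrac{l_0+1}{p-1}$, so if $v(D_{l_0})=0$ the $L=l_0$ term strictly dominates all others ultrametrically, giving $v(\det M)=\tfrac{l_0}{p-1}<\infty$ and hence $\det M\neq0$. So the theorem reduces to computing the leading coefficient $D_{l_0}$ and showing it is a $\mathfrak p$-adic unit --- which simultaneously certifies that $\det M\not\equiv0$ as a polynomial in $y$.

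The computation of $D_{l_0}$ is the crux, and here the block structure does most of the work. At $y=0$ the matrix $M$ degenerates to $\bigl(\begin{smallmatrix}\mathbf1&\mathbf1\\ \mathbf1&-\mathbf1\end{smallmatrix}\bigr)$, which has rank $2$ (as $a,b\ge1$); so, unlike in the prime case where the order of vanishing equals $\binom k2$, the two parity layers behave like two decoupled Chebotarev systems, and I expect $l_0=\binom a2+\binom b2$ (the lower bound $l_0\ge\binom a2+\binom b2$ is clear, since a nonzero contribution to $D_L$ requires the exponents attached to the rows of $I_2^0$ to have distinct values, and likewise for $I_2^1$). Following Frenkel's analysis, only those multi-indices $(l_i)$ contribute to $D_{l_0}$ for which the exponents on the rows of $I_2^0$ are $0,1,\dots,a-1$ in some order and those on $I_2^1$ are $0,1,\dots,b-1$ in some order; expanding such a determinant by Laplace along the column blocks $J_2^0$ and $J_2^1$ reduces it to products of two determinants of the type Frenkel evaluated in the prime case, and I expect one obtains
\[
D_{l_0}\;=\;\pm\,2^{\min(a,b)}\,\frac{V(S_0)\,V(T_0)\,V(S_1)\,V(T_1)}{\bigl(\textstyle\prod_{m=0}^{a-1}m!\bigr)\bigl(\textstyle\prod_{m=0}^{b-1}m!\bigr)},
\]
where $S_\varepsilon=\{i_p:i\in I_2^\varepsilon\}$, $T_\varepsilon=\{j_p:j\in J_2^\varepsilon\}$, and $V(Z)=\prod_{\substack{z,z'\in Z\\ z<z'}}(z'-z)$. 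Every factor on the right is a $\mathfrak p$-adic unit: the power of $2$ because $p$ is odd; the differences $z'-z$ because within each of $S_0,S_1,T_0,T_1$ the elements are distinct residues in $\{0,\dots,p-1\}$, hence differ by something not divisible by $p$; and the factorials because $a,b\le p$ forces every $m$ that appears to be $<p$. Thus $v(D_{l_0})=0$, and the proof of Theorem~\ref{Thm:A}, and with it the corollary, is complete.

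The step I expect to be the main obstacle is exactly this last computation. With the sign $-1$ present on the $D$-block, the rows of $M$ are not honest polynomials in the variable $j_p$ but polynomials twisted by $(-1)^{j_2}$, so the clean dichotomy ``the exponent multiset must dominate the staircase, or the determinant vanishes'' underlying the Chebotarev--Frenkel argument has to be re-established separately on the two parity blocks (keeping track, in addition, of the columns $j\in J_2^0$ and $j'\in J_2^1$ with $j_p=j'_p$, which make the even rows and the odd rows live in complementary ``linked'' and ``anti-linked'' subspaces). After the Laplace expansion one must then verify that the many terms assemble into the single product above instead of cancelling. Everything else --- the Chinese Remainder reduction, the binomial expansion, and the ultrametric domination --- is routine.
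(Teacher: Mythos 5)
Your route is genuinely different from the paper's. The paper (following Frenkel) never computes a valuation explicitly: it reduces modulo the prime ideal $\langle 1-\zeta_p\rangle$ of $\Z[\zeta_p]$, obtains polynomials $\bar T_0=\bar S_0+\bar S_1$ and $\bar T_1=\bar S_0-\bar S_1$ over $\F_p$ with many roots at $\bar 1$, forms the combinations $\bar T_0\pm\bar T_1=2\bar S_0,\,2\bar S_1$, and applies Frenkel's support--multiplicity lemma (Lemma~\ref{Lem:Frenkel}) to each piece to kill all the coefficients. The only place the factor $2$ enters is the one-line observation that $1-\zeta_p\nmid 2$. You instead propose to pin down the exact $(1-\zeta_p)$-adic valuation of $\det M$ by expanding $\omega=1+y$ and identifying the leading term; this is closer in spirit to the classical explicit-Vandermonde proofs of Chebotarev's theorem than to Frenkel's.

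The problem is that the crux of your argument --- the identities $l_0=\binom a2+\binom b2$ and, more seriously, the closed form
\[
D_{l_0}=\pm\,2^{\min(a,b)}\,\frac{V(S_0)V(T_0)V(S_1)V(T_1)}{\bigl(\prod_{m<a}m!\bigr)\bigl(\prod_{m<b}m!\bigr)}
\]
--- is asserted, not proved, and you say so yourself. That formula is not a routine check: because the sign $(-1)^{i_2j_2}$ twists the $b\times b$ lower-right block, the row vectors do not split into two independent Chebotarev systems in the way the asserted factorisation requires, and the Laplace expansion over the column partition $J_2^0\sqcup J_2^1$ produces a genuine alternating sum of $\binom{a+b}{a}$ products whose collapse to a single monomial with the clean coefficient $2^{\min(a,b)}$ has to be demonstrated. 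Your ``lower bound is clear'' remark is also not enough: columns $j\in J_2^0$ and $j'\in J_2^1$ with $j_p=j'_p$ couple the two blocks, so one cannot simply argue blockwise that the exponent multisets must dominate a staircase. Without a proof of these two claims the argument has no content, whereas the paper's mod-$(1-\zeta_p)$ route sidesteps the leading-coefficient computation entirely by passing directly to $\F_p$ and trading the explicit formula for the qualitative Lemma~\ref{Lem:Frenkel}.
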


In the general case where \( n = pr \) and \( p, r \) are distinct odd primes, we can establish an analogue of \autoref{Thm:A} only under two additional assumptions about \( p \) and \( r \). First, \( p \) must be a primitive element in \( \Z_r \) (that is, $p$ is a generator of $\Z_r^*$ or equivalently the order of $p \pmod r$ is $r-1$).  Second, \( p \) must exceed a constant \( \Gamma_r \) that depends on \( r \) (details about \( \Gamma_r \) are provided in \autoref{Sec:n=pr}). 
The constant $\Gamma_r$ is required in our theorem  because we are relying on  the following theorem of G. Zhang (Theorem A in \cite{Zhang}),   that is the analogue of Chebotarev's theorem for finite fields.   
\begin{theorem}(Zhang)\label{Zhang}
    Let $p, r$ be distinct odd primes with $p$ primitive  in  $\Z_r$ and $p > \Gamma_r$.  Suppose that  $\omega$ is an $r$-th  primitive root of unity in $\F_{p^{r-1}}$. Then all square submatrices of $(\omega^{ij})_{i,j= 0}^{r-1}$  have nonzero determinants. 
\end{theorem}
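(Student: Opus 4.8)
The plan is to deduce \autoref{Zhang} from the classical Chebotarev theorem (\autoref{Thm:Chebotarev}, which applies since $r$ is prime) by a reduction modulo $p$, with the constant $\Gamma_r$ entering only as a bound on certain integer coefficients. Given $I, J \subseteq \{0,1,\dots,r-1\}$ with $|I| = |J|$, I would introduce the integer polynomial
\[
P_{I,J}(x) := \det\bigl(x^{ij}\bigr)_{i\in I,\, j\in J} \in \Z[x],
\]
each exponent $ij$ read as an ordinary integer; its degree and coefficient sizes are bounded explicitly in terms of $r$. Since $\omega^r = \zeta_r^r = 1$, specializing $x \mapsto \omega$ recovers the matrix of \autoref{Zhang} and $x \mapsto \zeta_r$ recovers a Chebotarev matrix, so what has to be shown is just that $P_{I,J}(\omega) \neq 0$ in $\F_{p^{r-1}}$ for every admissible pair $I, J$.

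First I would pin down the obstruction. Because $p$ is primitive in $\Z_r$, the cyclotomic polynomial $\Phi_r$ remains irreducible modulo $p$, of degree $r - 1 = [\F_{p^{r-1}}:\F_p]$; hence its reduction $\overline{\Phi_r}$ is exactly the minimal polynomial of $\omega$ over $\F_p$. So $P_{I,J}(\omega) = 0$ if and only if $\overline{\Phi_r} \mid \overline{P_{I,J}}$ in $\F_p[x]$. (Primitivity is essential precisely here: were $\overline{\Phi_r}$ reducible, the minimal polynomial of $\omega$ would be a proper factor of it and this equivalence would break.)

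Next I would pass to characteristic zero. Division by the monic $\Phi_r$ inside $\Z[x]$ gives
\[
P_{I,J}(x) = \Phi_r(x)\,Q_{I,J}(x) + R_{I,J}(x), \qquad Q_{I,J}, R_{I,J} \in \Z[x], \quad \deg R_{I,J} < r-1,
\]
and by \autoref{Thm:Chebotarev} the matrix $(\zeta_r^{ij})_{i\in I,\,j\in J}$ is nonsingular, so $P_{I,J}(\zeta_r) \neq 0$; since $\Phi_r$ is the minimal polynomial of $\zeta_r$ over $\Q$, this forces $R_{I,J} \neq 0$. Reducing the displayed identity modulo $p$ --- where $\overline{\Phi_r}$ is still monic of degree $r - 1 > \deg \overline{R_{I,J}}$ --- shows that $\overline{\Phi_r} \mid \overline{P_{I,J}}$ exactly when $\overline{R_{I,J}} = 0$, i.e.\ exactly when $p$ divides every coefficient of $R_{I,J}$. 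I would then define $\Gamma_r$ to be the largest absolute value of a coefficient occurring in any of the finitely many (and all nonzero) remainders $R_{I,J}$; for $p > \Gamma_r$ none of those coefficients is divisible by $p$, so $\overline{R_{I,J}} \neq 0$, hence $\overline{\Phi_r} \nmid \overline{P_{I,J}}$, hence $P_{I,J}(\omega) \neq 0$, which is the claim.

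The scheme is short, so the real work is bookkeeping: bounding or computing the remainders $R_{I,J}$ to make $\Gamma_r$ explicit, and checking that reduction modulo $p$ does what is claimed. The conceptual crux --- and the spot demanding care --- is the primitivity hypothesis: it is exactly what keeps $\overline{\Phi_r}$ irreducible and thereby turns ``$\omega$ is a root of $\overline{P_{I,J}}$'' into the clean divisibility statement used above; drop it and the argument collapses. It is worth contrasting this with the characteristic-$p$ proofs of \autoref{Thm:Chebotarev} itself (Tao's, say), which exploit the \emph{ramified} congruence $\Phi_p \equiv (x-1)^{p-1} \pmod p$: here $r \neq p$, the extension $\F_{p^{r-1}}/\F_p$ is \emph{unramified}, and it is unramifiedness (provided by $p$ primitive modulo $r$), not ramification, that drives things.
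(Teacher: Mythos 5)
The paper does not actually prove \autoref{Zhang}; it is quoted verbatim as Theorem A of \cite{Zhang} and invoked as a black box, so there is no in-paper proof to compare your argument against. Evaluated on its own, your logical skeleton is sound: primitivity of $p$ modulo $r$ makes $\overline{\Phi_r}$ irreducible in $\F_p[x]$ and hence equal to the minimal polynomial of $\omega$, so $P_{I,J}(\omega)=0$ iff $\overline{\Phi_r}\mid\overline{P_{I,J}}$; the division $P_{I,J}=\Phi_r Q_{I,J}+R_{I,J}$ is legitimate in $\Z[x]$ since $\Phi_r$ is monic; $R_{I,J}(\zeta_r)=P_{I,J}(\zeta_r)\neq 0$ by \autoref{Thm:Chebotarev} forces $R_{I,J}\neq 0$; and once $p$ exceeds every coefficient of every $R_{I,J}$ in absolute value, $\overline{R_{I,J}}\neq 0$ and hence $P_{I,J}(\omega)\neq 0$. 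This correctly establishes the conclusion for all sufficiently large primes $p$ that are primitive modulo $r$.

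The gap is the constant, and it is not cosmetic. You quietly replace $\Gamma_r$ by a different quantity, namely the largest absolute value of a coefficient occurring in any remainder $R_{I,J}$, whereas the theorem as stated, and as quoted in \autoref{Sec:n=pr}, uses Zhang's specific $\Gamma_r=\max_n\gamma_n$ built from the Vandermonde ratios $V_n(a_1,\ldots,a_n)/V_n(0,\ldots,n-1)$. You offer no argument that your constant is dominated by Zhang's, and there is no reason it should be: the coefficients of $R_{I,J}$ come from reducing, modulo $\Phi_r$, alternating sums of up to $|I|!$ monomials of degree up to roughly $|I|(r-1)^2$, and these are not a priori controlled by the $\gamma_n$. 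If your constant is larger, your proof yields a strictly weaker statement than the one asserted. Closing the gap requires identifying the obstruction $\overline{R_{I,J}}=0$ with the Vandermonde/generalized-binomial quantities that actually define $\Gamma_r$; that identification is precisely where the content of Zhang's threshold lives, not the ``bookkeeping'' your sketch defers it to.
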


Our main theorem reads:
\begin{prevtheorem}
\label{Thm:B}
Let $ n = pr$, where $ p $ and $r$ are distinct odd primes such that $p$ is primitive in $\Z_r$ and $ p >\Gamma_r$.    If $I, J \leq \Z_{n}$ with $|I_r^k| = |J_r^k|$, for all $k=0,1, \cdots,  r-1$, then the matrix $(\zeta_{n}^{ij})_{i \in I, j\in J}$ has nonzero determinant.    
\end{prevtheorem}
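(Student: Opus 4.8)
The plan is to adapt Frenkel's valuation argument from \cite{Frenkel}. Write $M:=(\zeta_{n}^{ij})_{i\in I,\,j\in J}$ and fix the valuation $v$ on $\overline{\Q}$ extending the $p$-adic one; the aim is to prove $v(\det M)<\infty$. Since $p$ is totally ramified of index $p-1$ in $\Q(\zeta_p)$ and, as $p$ has order $r-1$ in $\Z_r^{*}$, inert in $\Q(\zeta_r)$, there is a unique prime of $\Z[\zeta_n]$ above $p$. Normalising $v$ by $v(\pi)=1$ where $\pi:=\zeta_p^{\,r}-1$, this prime has ramification index $p-1$, so $\pi$ is a uniformiser and $v(p)=p-1$, its residue field is $\F_{p^{r-1}}$, and modulo it $\zeta_p\equiv 1$ while $\zeta_r$ becomes a primitive $r$-th root of unity $\omega$. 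Write $I=\dot\bigcup_k I_r^{k}$ and $J=\dot\bigcup_l J_r^{l}$, put $m_k:=|I_r^{k}|=|J_r^{k}|$, let $\mathcal K:=\{k:m_k>0\}$, and record block $k$ by the $m_k$-element sets $A_k,B_k\subseteq\{0,\dots,p-1\}$ of the values $i_p$ for $i\in I_r^{k}$, respectively $j_p$ for $j\in J_r^{k}$. (Theorem~\ref{Thm:A} is the case $r=2$, where $\omega=-1$ and the appeal to Zhang below is trivial.)

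Now expand. From $\zeta_{n}^{ij}=(\zeta_p^{\,rj_p})^{i_p}\,\zeta_r^{\,p i_r j_r}$ and $(\zeta_p^{\,rj_p})^{i_p}=\sum_{s\ge 0}\binom{i_p}{s}(\zeta_p^{\,rj_p}-1)^{s}$, the row of $M$ indexed by $i=(i_p,k)$ equals $\sum_{s}\binom{i_p}{s}W_{s,k}$, where $W_{s,k}$ is the vector with $(j_p,l)$-entry $\zeta_r^{\,pkl}(\zeta_p^{\,rj_p}-1)^{s}$. The Cauchy--Binet formula applied to this factorisation of $M$ gives
\begin{align*}
\det M&=\sum_{(T_k)_{k\in\mathcal K}}\ \Big(\prod_{k\in\mathcal K}\det\big(\tbinom{i_p}{t}\big)_{i_p\in A_k,\,t\in T_k}\Big)\,\Delta\big((T_k)\big),\\
\text{where}\qquad \Delta\big((T_k)\big)&:=\det\big(\zeta_r^{\,pkl}(\zeta_p^{\,rj_p}-1)^{t}\big)_{(j_p,l)\,;\,(t,k)},
\end{align*}
the outer sum over all tuples of $m_k$-element subsets $T_k\subset\Z_{\ge0}$. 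Pulling $\pi^{t}$ out of the column of $\Delta((T_k))$ indexed by $(t,k)$ gives $v\big(\Delta((T_k))\big)\ge\sum_{k}\sum_{t\in T_k}t$, which is $>\sum_k\binom{m_k}{2}$ unless $T_k=\{0,\dots,m_k-1\}$ for every $k$; moreover $\prod_k\det\big(\tbinom{i_p}{t}\big)_{i_p\in A_k,\,t\in T_k}$ is an integer, so it has nonnegative valuation, and for $T_k=\{0,\dots,m_k-1\}$ it equals $\prod_k\big(\prod_{a<a'\in A_k}(a'-a)\big)\big/\prod_{0\le t<m_k}t!$, a $v$-unit (its factors are prime to $p$, as $m_k\le p$). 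Consequently the summand for $(T_k)=(\{0,\dots,m_k-1\})_k$ equals $u\,\pi^{\sum_k\binom{m_k}{2}}\,\Phi_0$ for a $v$-unit $u$, where $\Phi_0:=\det\big(\zeta_r^{\,pkl}y_{j_p}^{\,t}\big)$ with $0\le t<m_k$ and $y_{j_p}:=(\zeta_p^{\,rj_p}-1)/\pi=1+\zeta_p^{\,r}+\dots+\zeta_p^{\,r(j_p-1)}$ an algebraic integer reducing to $j_p$ modulo the prime, while every other summand has valuation $>\sum_k\binom{m_k}{2}$. Hence, once one shows $\overline{\Phi_0}\ne 0$ in $\F_{p^{r-1}}$, this distinguished summand is the unique one of minimal valuation and $v(\det M)=\sum_k\binom{m_k}{2}<\infty$, so $\det M\ne 0$. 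Here $\overline{\Phi_0}=\det\big(\omega^{\,pkl}j_p^{\,t}\big)$, rows indexed by $l\in\mathcal K$ and $j_p\in B_l$, columns by $k\in\mathcal K$ and $0\le t<m_k$.

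Proving $\overline{\Phi_0}\ne0$ is the main obstacle and the only place Zhang's Theorem~\ref{Zhang} is used. Dually, one must show: if $Q_k\in\F_{p^{r-1}}[x]$ with $\deg Q_k<m_k$ satisfy $\sum_{k\in\mathcal K}\omega^{\,pkl}Q_k(c)=0$ for all $l\in\mathcal K$ and all $c\in B_l$, then all $Q_k=0$. Suppose not, put $\delta:=\max_k\deg Q_k\ge 0$, and set $\mathcal F(X,x):=\sum_{k\in\mathcal K}X^{k}Q_k(x)$. Its leading $x$-coefficient $\widehat Q(X):=[x^{\delta}]\mathcal F(X,x)$ is a \emph{nonzero} polynomial supported in the nonempty set $\mathcal K_{>\delta}:=\{k\in\mathcal K:m_k>\delta\}$, because $\deg Q_k<m_k$. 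For each $l\in\mathcal K_{>\delta}$ the polynomial $\mathcal F(\omega^{\,pl},x)$ has $x$-degree $\le\delta<m_l$ but vanishes at the $m_l$ distinct points of $B_l$, so it is identically zero; its coefficient of $x^{\delta}$ yields $\widehat Q(\omega^{\,pl})=0$. Thus the nonzero polynomial $\widehat Q$, a sum of at most $|\mathcal K_{>\delta}|$ monomials $X^{a}$ with $0\le a<r$, vanishes at the $|\mathcal K_{>\delta}|$ distinct $r$-th roots of unity $\{\omega^{\,pl}:l\in\mathcal K_{>\delta}\}$; this contradicts Theorem~\ref{Zhang}, which is equivalent to the assertion that a nonzero $\F_{p^{r-1}}$-polynomial that is a sum of $w$ monomials $X^{a}$ with $0\le a<r$ has at most $w-1$ roots among the $r$-th roots of unity. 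Hence all $Q_k=0$, i.e.\ $\overline{\Phi_0}\ne 0$, completing the proof.

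The remaining work is routine bookkeeping: justifying the Cauchy--Binet step (the block structure of the first factor forces the tuples $(T_k)$ to appear and kills repeated choices of $W_{s,k}$), and dealing with the degenerate cases where some $i\in I$ or $j\in J$ lies in $p\Z_n$ — then $\zeta_p^{\,rj_p}-1=0$, so the relevant $W_{s,k}$ and the corresponding row of $\Phi_0$ are concentrated in degree $t=0$, which does not disturb any valuation estimate. The genuine difficulty is entirely in the displayed polynomial claim, which is precisely why $p$ is required to be primitive modulo $r$ and to exceed $\Gamma_r$: these are the hypotheses of Zhang's theorem.
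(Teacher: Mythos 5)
Your proof is correct, but it follows a genuinely different route from the paper's. Both arguments reduce modulo the unique prime of $\Z[\zeta_n]$ above $p$ and lean crucially on Zhang's theorem, but the mechanism is dual. The paper turns the row constraints into an $|L|\times|L|$ linear system $\bar T_t(x)=\sum_k\bar\omega_r^{tk}\bar S_k(x)$ of \emph{sparse} polynomials over $\F_{p^{r-1}}$: each $\bar T_t$ is divisible by $(x-\bar 1)^{|I_r^t|}$, the coefficient matrix $\bar R$ is a square submatrix of the finite-field Fourier matrix and hence invertible by Zhang, and Frenkel's Lemma~\ref{Lem:Frenkel} (high multiplicity at a nonzero point forces a sparse polynomial to vanish) is then applied iteratively, killing the $\bar S_k$ with the smallest block first and shrinking the system each time. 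You instead expand each entry $\zeta_p^{r i_p j_p}$ in the basis $\binom{i_p}{s}(\zeta_p^{rj_p}-1)^s$, factor the submatrix accordingly, apply Cauchy--Binet, and show that the unique summand of minimal $p$-adic valuation has nonvanishing reduction $\overline{\Phi_0}$. Proving $\overline{\Phi_0}\ne 0$ is then dual to the paper's claim: instead of sparse polynomials with a high-multiplicity root, you face \emph{degree-bounded} polynomials $Q_k$ ($\deg Q_k<m_k$) vanishing on blocks $B_l$ of distinct points, and you dispatch this in one stroke with a leading-coefficient descent rather than by iteration: at the top degree $\delta$ the lacunary polynomial $\widehat Q(X)$ supported on $\mathcal K_{>\delta}$ is nonzero yet vanishes at $|\mathcal K_{>\delta}|$ distinct $r$-th roots of unity, contradicting the reformulation of Zhang. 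What your approach buys is more information --- the exact valuation $v(\det M)=\sum_k\binom{m_k}{2}$ --- and it replaces the paper's iterative use of Frenkel's Lemma by a single application of a dual degree argument. What the paper's approach buys is brevity: it avoids the Cauchy--Binet bookkeeping (including the block-diagonal structure you need to invoke and the degenerate $j_p=0$ cases) and can quote Lemma~\ref{Lem:Frenkel} as a black box where you must prove the dual statement inline.
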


As with \autoref{Cor:n=2p},\autoref{Thm:B} implies:
\begin{corollary}\label{Cor:n=pr}
    Assume  $ n = pr$, where $ p $ and $r$ are distinct odd primes such that $p$ is primitive in  $\mathbb{Z}_r$ and $ p >\Gamma_r$. Then every principal  submatrix of $(\zeta_n^{ij})_{0 \leq i,j\leq n-1}$ has non-zero determinant.
\end{corollary}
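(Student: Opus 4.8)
The plan is to prove Theorem~\ref{Thm:B}; Corollary~\ref{Cor:n=pr} is the case $I=J$. Via the isomorphism $\Z_p\times\Z_r\cong\Z_n$ and the factorisation $\zeta_n^{ij}=\zeta_p^{r i_pj_p}\zeta_r^{pi_rj_r}$, the Vandermonde matrix $(\zeta_n^{ij})_{i,j\in\Z_n}$ becomes, after the obvious relabelling, a Kronecker product $H\otimes G$ with $H=(\eta^{ab})_{a,b\in\Z_p}$ and $G=(\xi^{kl})_{k,l\in\Z_r}$ Chebotarev matrices (here $\eta=\zeta_p^{\,r}$, $\xi=\zeta_r^{\,p}$ are again primitive $p$-th and $r$-th roots of unity). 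Under this identification the row set $I$ corresponds to $\{(a,k):k\in\Z_r,\ a\in A_k\}$ with $A_k:=\{i_p:i\in I_r^k\}$, the column set $J$ to $\{(b,l):b\in B_l\}$ with $B_l:=\{j_p:j\in J_r^l\}$, and the hypothesis reads $|A_k|=|B_k|=:d_k$. Splitting $H\otimes G=(H\otimes \mathrm{Id})(\mathrm{Id}\otimes G)$ and applying the Cauchy--Binet formula gives $\det M=\sum_{K}\bigl(\prod_k\det H[A_k,\widehat K_k]\bigr)\bigl(\prod_c\det G[K^c,J^c]\bigr)$, the sum running over subsets $K$ of the grid $\Z_p\times\Z_r$ whose $k$-th horizontal fibre $\widehat K_k$ has $d_k$ elements and whose $c$-th vertical fibre $K^c$ has $\beta_c:=\#\{l:c\in B_l\}=|J^c|$ elements (all other $K$ contribute zero because some block fails to be square). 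Every factor is nonzero over $\C$ by Chebotarev's theorem, but the sum could a priori cancel, so I pass to a $p$-adic valuation.

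Since $p$ is primitive in $\Z_r$, there is a single prime $\mathfrak p$ of $\Z[\zeta_n]$ above $p$, with $e=p-1$, $f=r-1$, residue field $\F_{p^{r-1}}$, and uniformiser $\pi=1-\zeta_p$; modulo $\mathfrak p$ one has $\eta\equiv 1$ while $\xi$ maps to a primitive $r$-th root of unity $\omega\in\F_{p^{r-1}}$, so $H$ reduces to the all-ones matrix and $G$ to the Zhang matrix $\overline G=(\omega^{kl})$ of Theorem~\ref{Zhang}. Frenkel's $\pi$-adic computation (the bialternant / Schur-polynomial expansion of a generalised Vandermonde determinant) gives $v_{\mathfrak p}\bigl(\det H[A_k,\widehat K_k]\bigr)=\binom{d_k}{2}$, with leading coefficient $(-1)^{\binom{d_k}{2}}V(A_k)\,V(\widehat K_k)/V(0,1,\dots,d_k-1)$, where $V(\cdot)$ is a Vandermonde product; this is a nonzero element of $\F_p$ because $d_k\le p$ and the entries of $A_k,\widehat K_k\subseteq\Z_p$ are pairwise distinct. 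On the other hand $\det G[K^c,J^c]$ is a $\beta_c\times\beta_c$ minor of $G$ whose reduction is the corresponding minor of $\overline G$, nonzero by Theorem~\ref{Zhang} (this is where the hypotheses ``$p$ primitive in $\Z_r$'' and ``$p>\Gamma_r$'' enter), so $v_{\mathfrak p}(\det G[K^c,J^c])=0$. Hence \emph{every} term of the Cauchy--Binet sum has the same valuation $V:=\sum_k\binom{d_k}{2}$, and the reduction of $\pi^{-V}\det M$ in $\F_{p^{r-1}}$ equals a nonzero scalar times $\sum_K\bigl(\prod_k V(\widehat K_k)\bigr)\bigl(\prod_c\det\overline G[K^c,J^c]\bigr)$.

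Running Cauchy--Binet backwards identifies this last sum with $\det\bigl((W^{\mathsf T}\otimes\overline G)[E,J]\bigr)$, where $W=(c^{e})_{c\in\Z_p,\ 0\le e\le p-1}$ is the Vandermonde matrix on $\{0,1,\dots,p-1\}\subseteq\F_p$ and $E=\{(e,k):0\le e<d_k\}$ is the ``staircase'' index set. Within each $\Z_r$-fibre $\{(c,l):c\in B_l\}$ of the columns the Zhang factor $\omega^{kl}$ is constant in $c$, so performing divided-difference column operations in $c$ inside each fibre puts that block into Vandermonde-reduced form; since $E$ is downward closed in the exponent $e$, the resulting matrix is block triangular along the $e$-grading, and its $e$-th diagonal block is the \emph{principal} submatrix $\overline G[S_e,S_e]$ with $S_e:=\{k\in\Z_r:d_k>e\}$ --- it is precisely the hypothesis $|I_r^k|=|J_r^k|$ that makes the $e$-th row-level and the $e$-th column-level both indexed by $S_e$. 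Bookkeeping the Vandermonde factors produced by the column operations yields
\[
  \det\bigl((W^{\mathsf T}\otimes\overline G)[E,J]\bigr)\;=\;\pm\Bigl(\prod_{e\ge 0}\det\overline G[S_e,S_e]\Bigr)\Bigl(\prod_{l\in\Z_r}V(B_l)\Bigr),
\]
which is nonzero: each $\det\overline G[S_e,S_e]\ne 0$ by Theorem~\ref{Zhang} applied to the (square) submatrix indexed by $S_e$, and each $V(B_l)\ne 0$ since the entries of $B_l$ are distinct modulo $p$. Therefore $v_{\mathfrak p}(\det M)=V<\infty$, so $\det M\ne 0$, which proves Theorem~\ref{Thm:B} and hence Corollary~\ref{Cor:n=pr}.

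The heart of the argument --- and the step I expect to demand the most care --- is the two Cauchy--Binet passes together with their interface with Frenkel's single-minor computation: one must verify that each term of the first expansion has valuation exactly $V$ with the stated leading coefficient, and then that the backwards expansion genuinely repackages the residue of $\pi^{-V}\det M$ as the staircase minor $\det((W^{\mathsf T}\otimes\overline G)[E,J])$, signs included. The concluding determinant evaluation via the $e$-graded triangulation is routine once set up, but it must be carried out carefully enough to see that the diagonal blocks are the principal submatrices $\overline G[S_e,S_e]$ (so that Theorem~\ref{Zhang} applies) and that every Vandermonde and factorial quantity met along the way is a unit at $\mathfrak p$, which is exactly what the inequalities $d_k\le p$ guarantee. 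For $r=2$ (Theorem~\ref{Thm:A}) the Zhang input degenerates to the trivial invertibility of $\begin{pmatrix}1&1\\1&-1\end{pmatrix}$ and its principal minors modulo the odd prime $p$, which explains why no size restriction on $p$ is needed there.
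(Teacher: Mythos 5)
Your proposal proves Theorem~\ref{Thm:B} (and hence the corollary as the case $I=J$) by a genuinely different route from the paper, although both arguments ultimately feed on the same two ingredients: the fact that $\langle 1-\zeta_p\rangle$ is prime in $\Z[\zeta_n]$ with residue field $\F_{p^{r-1}}$ (\autoref{Lem:MaximalIdeal}, \autoref{Lem:z_p-primitive}) and Zhang's finite-field Chebotarev theorem (\autoref{Zhang}). The paper's proof is a linear-algebraic translation of Frenkel's polynomial argument: it posits a dependence $\sum_j z_j\zeta_n^{ij}=0$, packages it as polynomial identities $T_t=\sum_k\zeta_r^{ptk}S_k$, reduces mod $1-\zeta_p$, inverts the (square!) Zhang matrix $\bar R$ to isolate $\bar S_{k_1}$, kills it by \autoref{Lem:Frenkel} (support versus multiplicity of the root $\bar 1$), and then iterates down the sorted list $|I_r^{k_1}|\le\cdots\le|I_r^{k_{|L|}}|$, reducing the system's size by one each time. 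Your proof instead computes the $\pi$-adic valuation of $\det M$ directly: a Cauchy--Binet expansion of the Kronecker factorisation $H\otimes G=(H\otimes\mathrm{Id})(\mathrm{Id}\otimes G)$, Frenkel's bialternant computation to pin down each $H$-minor's valuation $\binom{d_k}{2}$ and leading unit, Zhang's theorem to force each $G$-minor to be a unit, and then a second Cauchy--Binet in reverse plus a divided-difference block-triangularisation to show that the residue $\pi^{-V}\det M\pmod{\pi}$ is the nonzero product $\pm\prod_e\det\overline G[S_e,S_e]\cdot\prod_l V(B_l)$. Where the paper's iteration only ever needs that \emph{one} square submatrix of $\bar R$ (shrinking by one row/column each step) is invertible, your residue formula simultaneously invokes Zhang for the whole nested chain of principal minors $\overline G[S_0,S_0]\supseteq\overline G[S_1,S_1]\supseteq\cdots$; the net reliance on Zhang is comparable. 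What your approach buys is a closed-form expression for $v_\pi(\det M)$ and for its leading residue, which is sharper information than the paper extracts; what it costs is considerable sign- and ordering-bookkeeping across two Cauchy--Binet passes, which you rightly flag as the delicate step and which the paper's argument avoids entirely (the triangular elimination never produces a sum over competing index sets $K$). I did not find a gap, but the sign tracking in ``running Cauchy--Binet backwards'' --- specifically, that the product of the two block-diagonalisation signs $\epsilon_K\delta_K$ from the forward pass agrees, uniformly in $K$, with $\epsilon'_K\delta'_K$ from the backward pass --- is asserted rather than verified, and is the one place where a careless ordering convention could silently break the identification of the residue with $\det\bigl((W^{\mathsf T}\otimes\overline G)[E,J]\bigr)$.
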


While the inductive proof of \autoref{Thm:B} extends to more than two primes, our application of Zhang's Theorem restricts us to the two-prime case. 
As this paper was nearing publication, we learned that Caragea et al. \cite{CaLeMaPf} had improved Zhang's Theorem by removing the primitivity condition and  verifying \autoref{Conjecture} for any number of sufficiently separated primes. Additionally, Emmrich and Kunis \cite{EmKu} also improved Zhang's Theorem by lifting the primitivity restriction and also significantly lowering the bounds on the field's characteristic.

In \cite{Tao}, T. Tao proved that Chebotarev's theorem is equivalent to an improved uncertainty principle for complex-valued functions that  asserts: 
\[
|\supp(f)| + |\supp(\widehat{f})| \geq p + 1, 
\]
for every non-zero complex function $f : \Z_p \to \C$, where $\widehat{f}$ is the Fourier transform of $f$ defined on the dual group $\widehat{\Z_p}$. A. Biró \cite{Biro} and R. Meshulam \cite{Meshulam} also independently established this lower bound on the sum of the supports of $f$ and $\widehat{f}$.

Assume now that $G = \mathbb{Z}_r \times \mathbb{Z}_m$ with $(r, m) = 1$, and let $\widehat{G}$ be its dual group. Since $\widehat{G}$ is isomorphic to $G$ we can also choose to view $\widehat{G}$ as the group $\mathbb{Z}_r\times\mathbb{Z}_m$. For $i=0, 1, \ldots, r-1$ we can now define both $G_i$ and $\widehat{G}_i$ to be the subset $\{i\}\times\mathbb{Z}_m$.

Using a similar argument to that of Tao in \cite{Tao}
(see \autoref{Sec:Uncertainty} for the details) we get the following uncertainty  principle:
\begin{proposition}\label{Prop:Unc-A}
Let $ p$ be an odd prime. If $f: G= \Z_2 \times \Z_p \to \C $ is a non-zero function  then for at least one of $i=0 $ or $ i=1 $, we have 
\[
|\supp(f) \cap G_i| + |\supp(\widehat{f} \, ) \cap \widehat{G}_i| \geq p + 1.
\]
\end{proposition}
and 
\begin{proposition}\label{Prop:Unc-B}
Let $p$ and $r$ be distinct odd primes, with $p$ being primitive in $\Z_r$ and $p > \Gamma_r$. If $f: G:= \Z_r \times \Z_p \to \C$ is a non-zero function, then for at least one $i \in \{0, 1, \ldots, r-1\}$, we have 
\[
|\supp(f) \cap G_i| + |\supp(\widehat{f}) \cap \widehat{G}_i| \geq p + 1.
\]
\end{proposition}

{\bf Acknowledgment.}
I thank J. Antoniadis for his support and valuable discussions, G. Pfander for introducing me to the problem, and the referees for their insightful comments.

\section{Preliminaries}\label{Sec:prelim}

We begin with a lemma whose proof relies on fundamental concepts from algebraic number theory, briefly outlined here. Assume that $K$ is an  algebraic number field of degree $[K:\Q]=n$ with $K/\Q$ a Galois extension.  If $R$ is  the ring of algebraic  integers of $K$ then  any prime $p \in \Z$ satisfies  
\[
pR = \prod_{i=1}^r P_i^e, 
\]
where $\{ P_i \}$ are  the distinct prime ideals of $R$ lying above the ideal $p\Z$, and  $e:= e(P_i/p)$ is the ramification index of $P_i$ in $K/\Q$, which is the same for every $i$ as the extension is Galois.  Additionally, the residual degree $f_i:= f(P_i/p)$ of $P_i$ in $K/\Q$ is the same constant $f$ for each $i=1, \ldots, r$, and satisfies $f = [R/P_i : \Z/p\Z]$. Furthermore,
\begin{equation}\label{eq:efr}
 n=efr.   
\end{equation}
Both,  the ramification index and the residual degree follow some transitivity rules, in the sense that if $L/K$ is  a Galois extension, $S$ the ring of algebraic integers of $L$ and $Q$  a prime ideal of $S$ above $P$ then 
\begin{align*}
   f(Q/p)&= f(Q/P)\cdot f(P/p), \\
   e(Q/p)&= e(Q/P)\cdot e(P/p).
\end{align*}
The transitivity holds more general for separable extensions but we only need it here for Galois extensions.  All the above,  and much more, can be found in any book of Algebraic Number Theory, see for example Chapter 11 in \cite{Ribe}.

Assume now that $n=pm $ with $m$ coprime to $p$ and let $K= \Q(\zeta_n)$  be a cyclotomic field with  $R= \Z[\zeta_n]$ its ring of algebraic integers. In this special case  the decomposition of $pR$ into prime ideals in $R$ is given as 
\[
pR=  \prod_{i=1}^r P_i^e
\]
where  $e= \phi(p)=p-1$ and  $r= \frac{\phi(m)}{h}$ with  $h$ being the order of $p$ in the multiplicative group $\Z_m^*$. Furthermore, $f(P_i/p) = h$ and the norm of the ideals $P_i$ equals $N(P_i)=p^h$,  for all $i=1, \cdots, r$. 
For the general theorem regarding the factorization of a rational  prime $p$ into prime ideals in the ring of integers $\Z[\zeta_t]$ of  a cyclotomic field $\Q(\zeta_t)$ with $p \mid t$,  see Theorems 8.7 and 8.8 in \cite{Mann}.

We are now ready to prove our first lemma which generalizes  Lemma 1 in \cite{Frenkel}. 
\begin{lemma}\label{Lem:MaximalIdeal}
Let $p$ be an odd prime and $n = pm$ for some integer $m$ such that $(p, m) = 1$. The ideal $\langle 1 - \zeta_p \rangle = (1 - \zeta_p)\mathbb{Z}[\zeta_n]$ is prime (and hence maximal) in $\mathbb{Z}[\zeta_n]$ if and only if the order of $p$ in $\mathbb{Z}_m^*$ is $\phi(m)$ and so $\mathbb{Z}_m^*$ is cyclic and  $p$ is one of its  generators. In this case, the quotient 
\[
\mathbb{Z}[\zeta_{n}]/\langle 1 - \zeta_p \rangle = \F_{p^{\phi(m)}}
\] 
is a finite field of characteristic $p$ and order $p^{\phi(m)}$. Moreover, $m$ can only be one of  $m=1,2,4$, $q^k$, or $2q^k$ for some odd prime $q$ and a positive integer  $k$.
\end{lemma}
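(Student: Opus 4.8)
I would work inside the tower of cyclotomic fields $\Q \subseteq \Q(\zeta_p) \subseteq \Q(\zeta_n)$, with rings of integers $\Z \subseteq \Z[\zeta_p] \subseteq \Z[\zeta_n]$ (recall that for a cyclotomic field the ring of integers is generated by the corresponding root of unity, as noted above); all three fields are abelian over $\Q$, so the factorization facts and transitivity rules recalled in this section are available. The starting point is a comparison of two known factorizations of $\langle p\rangle$. On one hand, since $\Phi_p(1)=p$ one has the classical total ramification $\langle p\rangle = \langle 1-\zeta_p\rangle^{\,p-1}$ in $\Z[\zeta_p]$, with $\langle 1-\zeta_p\rangle$ the \emph{unique} prime above $p$ and $f(\langle 1-\zeta_p\rangle/p)=1$. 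On the other hand, the factorization recalled just before the lemma gives $p\,\Z[\zeta_n]=\prod_{i=1}^{r}P_i^{\,p-1}$ with $r=\phi(m)/h$, $h=\ord_{\Z_m^*}(p)$, and $f(P_i/p)=h$ for all $i$.

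Extending the first factorization to $\Z[\zeta_n]$ and comparing with the second, $\bigl(\langle 1-\zeta_p\rangle\Z[\zeta_n]\bigr)^{p-1}=p\,\Z[\zeta_n]=\bigl(\prod_{i=1}^{r}P_i\bigr)^{p-1}$, so by unique factorization of ideals in the Dedekind domain $\Z[\zeta_n]$ we obtain $\langle 1-\zeta_p\rangle\,\Z[\zeta_n]=P_1\cdots P_r$, a product of $r$ \emph{distinct} primes. (The same conclusion also follows from the transitivity of $e$ and $f$ in $\Q(\zeta_n)/\Q(\zeta_p)/\Q$: every $P_i$ lies over the unique prime $\langle 1-\zeta_p\rangle$ of $\Z[\zeta_p]$, and $e(P_i/p)=e(P_i/\langle 1-\zeta_p\rangle)\,(p-1)=p-1$ forces $e(P_i/\langle 1-\zeta_p\rangle)=1$, while $f(P_i/p)=f(P_i/\langle 1-\zeta_p\rangle)$.) Consequently $\langle 1-\zeta_p\rangle$ is prime, equivalently maximal, in $\Z[\zeta_n]$ if and only if $r=1$, i.e.\ if and only if $h=\phi(m)$ — which is precisely the statement that $p$ has order $\phi(m)$ in $\Z_m^*$, hence that $\Z_m^*$ is cyclic with $p$ one of its generators. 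In that case $\langle 1-\zeta_p\rangle=P_1$ and the residue field has $N(P_1)=p^{f(P_1/p)}=p^{h}=p^{\phi(m)}$ elements, so $\Z[\zeta_n]/\langle 1-\zeta_p\rangle\cong\F_{p^{\phi(m)}}$, a field of characteristic $p$.

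For the last assertion, the requirement that $\Z_m^*$ be cyclic is, by the classical theorem on primitive roots (Gauss), equivalent to $m\in\{1,2,4,q^k,2q^k\}$ for some odd prime $q$ and integer $k\geq 1$; I would simply quote this.

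I do not expect a genuine obstacle: the argument is essentially bookkeeping with the two factorizations, and every ingredient — the description of $\langle 1-\zeta_p\rangle$ in $\Z[\zeta_p]$, the factorization of $p$ in $\Z[\zeta_n]$, transitivity of $e$ and $f$, and the primitive-root theorem — is standard. The one point to handle with a little care is to identify, \emph{before} invoking transitivity, the prime of $\Z[\zeta_p]$ below each $P_i$ (namely $\langle 1-\zeta_p\rangle$, since $p$ is totally ramified in $\Q(\zeta_p)$); alternatively one bypasses transitivity via the root-extraction step above, or one computes directly $\Z[\zeta_n]/\langle 1-\zeta_p\rangle\cong\F_p[x]/(\overline{\Phi_m})$ and uses that $\Phi_m\bmod p$ splits into $\phi(m)/h$ irreducible factors of degree $h$.
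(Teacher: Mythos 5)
Your proposal is correct and follows essentially the same route as the paper: same cyclotomic tower, same two factorizations of $p$ (totally ramified in $\Q(\zeta_p)$, and $p\Z[\zeta_n]=\prod_i P_i^{p-1}$ with $r=\phi(m)/h$), same conclusion that primality of $\langle 1-\zeta_p\rangle$ in $\Z[\zeta_n]$ is equivalent to $r=1$, and the same computation of the residue field via $f(P_1/p)=h=\phi(m)$. The one small difference is cosmetic: where the paper identifies the $Q_i$ as exactly the primes above $P=\langle 1-\zeta_p\rangle$ in $\Z[\zeta_p]$ and invokes transitivity of $e$ to get $e(Q_i/P)=1$, you extract $(p-1)$-th roots in the free abelian group of fractional ideals to land directly on $\langle 1-\zeta_p\rangle\Z[\zeta_n]=P_1\cdots P_r$; this is a legitimate shortcut that avoids having to argue separately that each $P_i$ lies over $\langle 1-\zeta_p\rangle$, and you also note the transitivity route as an alternative, so the two proofs are interchangeable in substance.
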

\begin{proof}
Let $ L = \Q(\zeta_n) $ and $ K = \Q(\zeta_p) $ be cyclotomic fields, with \( S = \Z[\zeta_n] \) and \( R = \Z[\zeta_p] \) their  ring of algebraic integers. Clearly $[L:\Q]= \phi(n) = \phi(p) \phi(m)$,  $[K:\Q]= \phi(p)$ and thus $[L:K]= \phi(m)$. Furthermore, $L= K(\zeta_m)$ is also a Galois extension of $K$ of degree $\phi(m)$.

It is well known, see for example Section 11.3  Proposition N  in \cite{Ribe}, that the  ideal $P = ( 1 - \zeta_p ) R$ is a prime ideal of $R$, and it  is  the only ideal of $R$  above $p\Z$. In addition
\[
pR= P^{p-1}.
\]
Hence the ramification index $e(P/p)$ of $P$ in $K/\Q$ is $e(P/p)=p-1$ and its residual degree $f(P/p)= 1$.
Now, if $\{ Q_i\}_{i=1}^r$ is the set of prime ideals of $S$ lying above $p$ then 
\[
pS= \prod_{i=1}^r Q_i^{e(Q/p)}
\]
where $Q=Q_1$,  $e(Q/p)=p-1$ and $r= \frac{\phi(m)}{h}$ where $h$ is the order of $p$ in the multiplicative group $\Z_m^*$, by 	our preliminary remarks on the decomposition of a rational prime  into prime ideals in cyclotomic fields. In addition, $f(Q/p)=h$ and the norm of $Q_i$ is $N(Q_i)= p^h$. 

Observe now that   $\{Q_i\}_{i=1}^r$ is exactly  the set of prime ideals of $S$ above $P$ as well,  while  the extension $L/K$ is also Galois,  and thus 
\[
PS = \prod_{i=1}^r Q_i^{e(Q/P)}.
\]
We conclude that the ideal $PS= (1-\zeta_p)\Z[\zeta_n]$  is prime in $S= \Z[\zeta_n]$ if and only if $1=r =\frac{\phi(m)}{h}$. Hence the first part of the lemma follows. 

By the transitivity of the ramification index  we get  
\[
  p-1 = e(Q/p) = e(Q/P)\cdot e(P/p)= e(Q/P) \cdot (p-1).
\]
Hence $e(Q/P)=1$. 
Thus, in the case that  $r=1$ we get  $PS=Q$ and so 
\[
h=f(Q/p) = [S/Q:\Z/p\Z]= [
\mathbb{Z}[\zeta_{n}]/\langle 1 - \zeta_p \rangle :\Z_p].
\]
We conclude that $\mathbb{Z}[\zeta_{n}]/\langle 1 - \zeta_p \rangle $ is a finite field of order $p^h= p^{\phi(m)}$. 

The final part of the lemma states the well-known fact, first proved by Gauss, that the only integers 
$m$ for which $ \Z_m^* $ is cyclic are $1,2, 4, q^k $, or $ 2q^k $ for some odd prime $ q $ and integer $ k$. 
\end{proof}

The above lemma easily implies:

\begin{lemma} \label{Lem:z_p-primitive}
    Assume $p, r$ are distinct odd primes such that $p$ is primitive in  $\Z_r$. 
    Then \[
    \Z[\zeta_{pr}]/\langle 1- \zeta_p \rangle = \F_{p^{r-1}}
    \]
    and the image $\bar{\zeta_r}$  of $\zeta_r \in \Z[\zeta_{pr}]$ in  $\Z[\zeta_{pr}]/\langle 1- \zeta_p \rangle$  is also a primitive $r$-th root of unity in the field $\Z[\zeta_{pr}]/\langle 1- \zeta_p \rangle$.
\end{lemma}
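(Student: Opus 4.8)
The plan is to derive \autoref{Lem:z_p-primitive} directly from \autoref{Lem:MaximalIdeal} by specializing $m = r$. First I would note that since $p$ is primitive in $\Z_r$, the order $h$ of $p$ in $\Z_r^*$ equals $\phi(r) = r-1$; thus the hypothesis of \autoref{Lem:MaximalIdeal} is satisfied, so the ideal $\langle 1 - \zeta_p \rangle$ is prime (hence maximal) in $\Z[\zeta_{pr}]$, and the quotient $\Z[\zeta_{pr}]/\langle 1 - \zeta_p \rangle$ is the finite field $\F_{p^{\phi(r)}} = \F_{p^{r-1}}$. This gives the first displayed equation for free.

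For the second claim, the point is to show that $\bar\zeta_r$, the image of $\zeta_r$ under the quotient map $\pi : \Z[\zeta_{pr}] \to \F_{p^{r-1}}$, still has exact multiplicative order $r$. Since $r$ is prime, it suffices to check that $\bar\zeta_r \neq 1$, i.e.\ that $1 - \zeta_r \notin \langle 1 - \zeta_p \rangle$. I would argue this by a divisibility/norm comparison: in $\Z[\zeta_r]$ the element $1 - \zeta_r$ generates a prime ideal lying above $r$, while $1 - \zeta_p$ lies above $p$; these are distinct primes, so in the larger ring $\Z[\zeta_{pr}]$ the element $1 - \zeta_r$ is a unit times something coprime to $\langle 1 - \zeta_p \rangle$. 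Concretely, $\prod_{k=1}^{r-1}(1 - \zeta_r^k) = r$, so $r \in \langle 1 - \zeta_r \rangle$; if we had $1 - \zeta_r \in \langle 1 - \zeta_p \rangle$, then $r$ would lie in the maximal ideal $\langle 1 - \zeta_p \rangle$, which has residue characteristic $p \neq r$ — a contradiction. Hence $\bar\zeta_r \neq 1$, and since $\bar\zeta_r^{\,r} = \overline{\zeta_r^{\,r}} = \bar 1 = 1$, the order of $\bar\zeta_r$ divides the prime $r$ and is not $1$, so it is exactly $r$; that is, $\bar\zeta_r$ is a primitive $r$th root of unity in $\F_{p^{r-1}}$.

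The only mild subtlety — and the step I would be most careful about — is the clean justification that $r \notin \langle 1 - \zeta_p \rangle$. This follows because $\langle 1 - \zeta_p \rangle$ is a maximal ideal whose quotient field has characteristic $p$: the rational integers contained in it are exactly $p\Z$, and $r \notin p\Z$ since $p \neq r$ are distinct primes. With that observed, everything else is bookkeeping, and the lemma follows. (One could alternatively phrase the order argument via the fact that the minimal polynomial of $\bar\zeta_r$ over $\F_p$ must be the reduction mod $p$ of the $r$th cyclotomic polynomial, which is irreducible of degree $r-1$ precisely because $p$ is primitive mod $r$ — but the elementary argument above is cleaner and self-contained.)
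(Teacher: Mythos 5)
Your proof is correct and takes a genuinely different route from the paper's. Both of you obtain the identification $\Z[\zeta_{pr}]/\langle 1-\zeta_p\rangle = \F_{p^{r-1}}$ directly from \autoref{Lem:MaximalIdeal}, so the real content is showing $\bar\zeta_r$ has order exactly $r$. You exploit the primality of $r$ to reduce the order question to a single check, $\bar\zeta_r \neq \bar 1$, and then dispose of that with the identity $\Phi_r(1) = \prod_{k=1}^{r-1}(1-\zeta_r^k) = r$ together with the observation that the maximal ideal $\langle 1-\zeta_p\rangle$ has residue characteristic $p \neq r$, so $r \notin \langle 1-\zeta_p\rangle$. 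The paper instead assumes $\bar\zeta_r^{\,k} = \bar 1$ for an arbitrary $0 < k < r$ and computes full norms in $\Q(\zeta_{pr})/\Q$ via towers: $N_{L/\Q}(1-\zeta_p) = p^{r-1}$ and $N_{L/\Q}(1-\zeta_r^k) = r^{p-1}$, deriving a contradiction from the impossible integer divisibility $p^{r-1} \mid r^{p-1}$. Your argument is shorter and more elementary — it avoids the norm-tower computation entirely, using only the residue characteristic of the quotient — but it leans on $r$ being prime to cut the case analysis down to $k=1$, whereas the paper's norm argument handles all $k$ uniformly (using primality of $r$ only to ensure $\gcd(k,r)=1$ so that $\zeta_r^k$ is still primitive). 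Both are sound; yours is the cleaner proof for this lemma as stated.
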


\begin{proof}
We only need to show that $\bar{\zeta_r}$ has order $r$ in $\Z[\zeta_{pr}]/\langle 1- \zeta_p \rangle$.
Clearly $\bar{\zeta_r}^r= \bar{1}$  and suppose that 
$\bar{\zeta_r}^k= \bar{1}$ for some $0 < k < r$. 
Hence $\zeta_r^k-1 \equiv 0 \pmod{(1- \zeta_p)}$  and so 
$(1- \zeta_p) \mid (\zeta_r^k-1)$ in $Z[\zeta_{pr}]$.
If $L = \Q[\zeta_{pr}]$, $M= \Q[\zeta_r]$ and $K= \Q[\zeta_p]$ then $[L:M]= p-1$ and  $[L:K]= r-1$. Additionally, the norm \( N_{L/Q}(1- \zeta_p) = p^{r-1} \) by  \autoref{Lem:MaximalIdeal}, or we can compute it directly as follows:
\[
N_{L/Q}(1- \zeta_p) = N_{L/K}(N_{K/Q}(1 - \zeta_p)) = N_{L/K}\left(\prod_{i=0}^{p-1} (1-\zeta_p^i)\right) = N_{L/K}(\Phi_p(1)) = N_{L/K}(p) = p^{r-1}. 
\]
For $0 < k < r$, as $i$ ranges from $0$ to $r-1$, the set $\{ \zeta_r^{ik} \}$ covers all primitive $r$-th roots of unity. Hence 
\[N_{L/Q}(1- \zeta_r^k)= N_{L/M}(N_{M/Q}(1 - \zeta_r^k))=N_{L/M}\left(\prod_{i=0}^{r-1} (1-\zeta_r^{ik})\right)=
N_{L/M}(\Phi_r(1))= N_{L/M}(r)=r^{p-1}. 
\]
If $(1 - \zeta_p) \mid (\zeta_r^k - 1)$ in $\Z[\zeta_{pr}]$  we should have 
\[
p^{r-1}= N_{L/Q} (1-\zeta_p) \, \big| \,  N_{L/Q}(\zeta_r^k-1)= r^{p-1}.
\]
But this last division occurs in $\Z$, leading to an obvious contradiction and thus completing the proof of the lemma.
\end{proof}

We conclude the preliminaries of Algebraic Number Theory with two remarks.
\begin{remark}\label{remark1}
 For every odd prime $p$, the element $1 - \zeta_p$ in $\Z(\zeta_p)$ does not divide $2$ in $\Z(\zeta_p)$. If it did, then $N_{\Q(\zeta_p)/\Q}(1 - \zeta_p)$ should divide $N_{\Q(\zeta_p)/\Q}(2)$ in $\Z$, leading to a contradiction since $N_{\Q(\zeta_p)/\Q}(1 - \zeta_p) = p$ and $N_{\Q(\zeta_p)/\Q}(2) = 2^{p-1}$.
\end{remark}

\begin{remark}\label{remark2}
Let $D$ be a Dedekind Domain and $P = \langle a \rangle \neq 0$ a prime principal ideal of $D$. For any non-zero element $d \in D$, there exists an integer $k = 0,1, \ldots $ so that $a^k$ divides $d$. This follows from the unique factorization of the ideal $\langle d \rangle$ into a product of finite powers of prime ideals; that is, $a^k \mid d$ if and only if $P^k$ is a factor in the prime factorization of $\langle d \rangle$ and thus $0 \leq k < \infty$. 
\end{remark}

For any ring  $R$ and  any polynomial $g(x) \in R[x]$ 
we denote by $|\Supp(g(x))|$ the number of nonzero coefficients of $g(x)$. 
The following is Lemma 2 in \cite{Frenkel} and  for completeness we include its proof here.  
\begin{lemma}[Frenkel]\label{Lem:Frenkel}
Let  $ \F$ be a finite field of characteristic $p$ and    $0 \neq g(x) \in  \F[x]$  be a polynomial of degree $<p$. If $0 \neq a \in \F$ is a root of $g(x) $ with multiplicity $t$ then 
\[
t< |\Supp(g(x))|.
\]
\end{lemma}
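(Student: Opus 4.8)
The plan is to reduce the statement about the multiplicity of the root $a$ to the nonsingularity of a Vandermonde matrix whose nodes are the exponents occurring in $g$. Write $g(x)=\sum_{i=1}^{s}c_i x^{e_i}$ with all $c_i\in\F^{\times}$ and $0\le e_1<e_2<\cdots<e_s\le\deg g<p$, so that $s=|\supp(g(x))|$. The one genuinely load-bearing use of the hypothesis $\deg g<p$ is that the exponents $e_1,\dots,e_s$, being distinct integers in $\{0,1,\dots,p-1\}$, remain pairwise distinct in the prime field $\F_p\subseteq\F$.

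First I would record how the multiplicity of a nonzero root behaves under the operator $\theta:=x\frac{d}{dx}$ on $\F[x]$. If $g(x)=(x-a)^{t}q(x)$ with $q(a)\ne 0$, then $g'(x)=(x-a)^{t-1}\bigl(tq(x)+(x-a)q'(x)\bigr)$, so $a$ is a root of $g'$ of multiplicity at least $t-1$; since $a\ne 0$, multiplication by $x$ does not change the multiplicity of the root $a$, hence $\theta g=xg'$ has $a$ as a root of multiplicity at least $t-1$. Iterating, $\theta^{j}g$ has $a$ as a root for every $j=0,1,\dots,t-1$. On the other hand $\theta(x^{e})=e\,x^{e}$, so $\theta^{j}g(x)=\sum_{i=1}^{s}c_i e_i^{\,j}x^{e_i}$, and evaluating at $a$ yields the homogeneous system
\[
\sum_{i=1}^{s} e_i^{\,j}\,\bigl(c_i a^{e_i}\bigr)=0,\qquad j=0,1,\dots,t-1 .
\]

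Now suppose, for contradiction, that $t\ge s$. Then the identities above hold in particular for $j=0,1,\dots,s-1$, i.e.\ the vector $(c_1 a^{e_1},\dots,c_s a^{e_s})^{\top}$ lies in the kernel of the $s\times s$ matrix $(e_i^{\,j})_{0\le j\le s-1,\ 1\le i\le s}$. This is a Vandermonde matrix with nodes $e_1,\dots,e_s\in\F$, whose determinant equals $\prod_{1\le i<i'\le s}(e_{i'}-e_i)$; this product is nonzero in $\F$ precisely because the $e_i$ are pairwise distinct in $\F_p$, which is exactly where $\deg g<p$ enters. Hence the matrix is invertible, forcing $c_i a^{e_i}=0$ for all $i$, which contradicts $c_i\ne 0$ and $a\ne 0$. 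Therefore $t<s=|\supp(g(x))|$. The only routine parts are the derivative computation and the Vandermonde determinant evaluation; the substantive point — and the place I expect the argument to be most delicate to phrase cleanly — is the passage from root multiplicity to the full family of equations $\theta^{j}g(a)=0$ together with the observation that $\deg g<p$ keeps the exponents distinct modulo $p$.
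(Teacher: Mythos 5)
Your proof is correct, but it takes a genuinely different route from the paper's. The paper argues by induction on $\deg g$: if $g(0)=0$ it divides by $x$ (which preserves both the support size and the multiplicity of the root $a\neq 0$), and otherwise it differentiates, using that $|\supp(g'(x))|=|\supp(g(x))|-1$ when $g(0)\neq 0$ and that $a$ remains a root of $g'$ with multiplicity at least $t-1$; the hypothesis $\deg g<p$ enters only to guarantee $g'\neq 0$. You instead iterate the operator $\theta=x\,\frac{d}{dx}$, which packages the same two ingredients (differentiation drops the multiplicity at a nonzero point by at most one, and multiplication by $x$ does not change it) into the single linear system $\sum_{i} e_i^{\,j}\,(c_i a^{e_i})=0$ for $j=0,\dots,t-1$, and then invokes the nonsingularity of the Vandermonde matrix in the exponents, which is exactly where $\deg g<p$ is used (distinctness of the $e_i$ in $\F_p$). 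Both arguments are sound and of comparable length: yours is non-inductive and makes the role of the degree hypothesis maximally transparent (it is literally the nonvanishing of a Vandermonde determinant over the prime field), while the paper's induction avoids any matrix machinery and is the version that generalizes most directly to the way the lemma is applied later. One phrasing you should keep as is: stating the multiplicity of $a$ in $\theta^{j}g$ as ``at least $t-j$'' rather than exactly $t-j$ is necessary, since in characteristic $p$ the multiplicity can jump (e.g.\ when $t\equiv 0\pmod p$), and the weaker inequality is all the linear system requires.
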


\begin{proof}
We induct on the degree of $g(x)$,  with the base case, that of  constant polynomials being trivially true. 
Assume now the lemma holds for all polynomials of degree $<k$, for some fixed $1 \leq k <p$, and take $g(x)$ of degree $k$. If $g(0)= 0 $, then $|\Supp(g(x))|= |\Supp(g(x)/x)|$  and  $a$ is also a root of $g(x)/x$ with multiplicity $t$. By induction the lemma holds for $g(x)/x$ and thus for $g(x)$. So we may assume that $g(0) \neq 0 $. Then $|\Supp(g'(x))| = |\Supp(g(x))|-1$ and $a$ is a root of $g'(x)$ with multiplicity $\geq t-1$.
Because $g(x)$ is of positive degree $k < p$, its derivative  $g'(x) \neq 0$. So the lemma holds for $g'(x)$ and therefore also for $g(x)$. 
\end{proof}

\section{The case $n=2p$ }\label{Sec:n=2p}

Throughout this  section $n=2p$ with  $p$ being an odd prime. Observe that   $\zeta_n= (-1) \cdot \zeta_p$ and $\Z[\zeta_n]=\Z[\zeta_p]$. 

\begin{proofofA}
  The theorem is equivalent to saying that if numbers $z_j \in \Q(\zeta_n)$ $(j \in J)$ satisfy $\sum_{j \in J}z_j \zeta_n^{ij}=0$ for every $i \in I$, then all $z_j$ must be zero. Factoring out the  denominators, we can assume that $z_j \in  \Z[\zeta_p]$.  The ideal $P= \langle 1 - \zeta_p \rangle $ is prime in the Dedekind Domain $\Z[\zeta_p]$ and thus \autoref{remark2} implies that
  the maximum power of $1 - \zeta_p$ dividing $z_j $ is some  $k_j\in \N$. Dividing, if necessary, with $(1-\zeta_p)^{\min\{k_j\}}$ we may assume that there is at least one $z_j$ that is not divisible by $ 1 - \zeta_p$.
Thus we obtain the polynomial 
\begin{equation}\label{eq:s(x)}
r(x) = \sum_{j \in J }z_jx^{j}  \in \Z[\zeta_p][x] 
\end{equation}
which vanishes at $\zeta_n^i$ for all $i \in I$, and is such that  not all $z_j$ are divisible by $1-\zeta_p$.

Using the isomorphism and  notation from \eqref{eq:isomo} and \eqref{eq:zn^i} with $r=2$, we get $\zeta_{2p}^i= \zeta_p^{i_p}  \cdot (-1)^{i_2}$, 
for every $i \in I$.

Next we define a polynomial in two variables 
\begin{equation}\label{eq:g(x)}
g(x,y) = \sum_{j \in J }z_jx^{j_p}y^{j_2}  \in \Z[\zeta_p][x,y]. 
\end{equation}
We clearly have 
\begin{equation}\label{eq:rootsOfg(x)}
g(\zeta_p^{2i_p}, (-1)^{i_2}) = \sum_{j \in J }z_j \zeta_p^{2i_pj_p} (-1)^{i_2j_2} = \sum_{j \in J }z_j \zeta_n^{ij}= 0 
\end{equation}
for all $i \in I$.

Consider the sets 
\begin{equation}\label{eq:I^k}
  J_2^k = \{ j \in J \mid j_2 = k  \},    
\end{equation}
for $k=0,1$.

{\bf Case 1. } Assume first that $J = J_2^1$ and thus $J_2^0 = \emptyset$. Consequently, we have $I = I_2^1$ and $I_2^0 = \emptyset$. Thus  the polynomial 
\[
T(x):= g(x,-1) = - \sum_{j \in J }z_jx^{j_p}  \in \Z[\zeta_p][x]
\]
has  roots 
$T(\zeta_p^{2i_p})=0$,  for all $i \in I$. So $T(x)$ is divisible by $\prod_{i \in I}(x- \zeta_p^{2i_p})$. Observe that all elements $\{ i_p \mid i \in I_2^1\}$  are distinct in $\Z_p$ and thus all $\zeta_p^{2i_p}$ are also distinct $p$-th roots of unity (as $\zeta_p^2$ is also a primitive $p$-th root).
Now we pass to the quotient $\Z[\zeta_p] /\langle 1 - \zeta_p \rangle $ by applying the homomorphism $\Z[\zeta_p] \to \Z[\zeta_p]/\langle 1 - \zeta_p \rangle \cong  \F_p $  to the coefficients of $T(x)$ to get the polynomial  $\bar{T}(x) \in \F_p[x]$. Clearly $(x- \bar{1})^{|I|}$ divides $\bar{T}(x)$. As 
 $|\Supp(T(x))| \leq |J|$  and $|I|=|J|$, \autoref{Lem:Frenkel} implies that $\bar{T}(x) =  \bar{0}$ in $ \Z[\zeta_p]/\langle 1 - \zeta_p \rangle $. Therefore, all coefficients $z_j$ of $T(x)$ are divisible by $1-\zeta_p$, contradicting the initial selection of $z_j$ in $r(x)$. We conclude that Case 1 is impossible.
 
The case $J = J_2^0$ similarly results in a contradiction, so we move on to the next case.

{\bf Case 2. }
Neither $J_2^0$ nor $J_2^1$ is an empty set.

Consider the polynomials 
\begin{align}
T_0(x)&:=g(x,1)= \sum_{j \in J}  z_j x^{j_p}= S_0(x) +S_1(x)\\
T_1(x)&:=g(x,-1)= \sum_{j \in J}  z_j x^{j_p} (-1)^{j_2}=S_0(x) - S_1(x), 
\end{align}
where $S_0(x):= \sum_{j \in J_2^0}  z_j x^{j_p}$ and $S_1(x):= \sum_{j \in J_2^1}  z_j x^{j_p}$. 
In view of equation \eqref{eq:rootsOfg(x)},  
$T_0(\zeta_p^{2i_p}) = 0$ for all $i \in I_2^0$, 
and similarly $T_1(\zeta_p^{2i_p})= 0$ for all $i \in I_2^1$. As in Case 1, the elements $\{\zeta_p^{2i_p}\} $ are distinct for all $i \in I_2^0$,
and the same holds for $\{ \zeta_p^{2i_p} \mid i \in I_2^1\}$.    Hence $\prod_{i \in I_2^0}(x- \zeta_p^{2i_p})$ divides  $T_0(x)$ and $\prod_{i \in I_2^1}(x- \zeta_p^{2i_p})$ divides  $T_1(x)$.  Passing again to the quotient  $\Z[\zeta_p]/\langle 1 - \zeta_p \rangle $ and to the corresponding polynomials there  we conclude that 
\begin{align}
(x-\bar{1})^{|I_2^0|}\, \,  & \big| \, \,   \bar{T}_0(x) =  \bar{S}_0(x) + \bar{S}_1(x), \\ 
(x-\bar{1})^{|I_2^1|}\, \,  & \big| \, \,   \bar{T}_1(x) =  \bar{S}_0(x) - \bar{S}_1(x).
\end{align}
Without loss assume $|I_2^0 | \leq |I_2^1|$. Then 
$(x-\bar{1})^{|I_2^0|}$ divides $\bar{T}_0(x) +\bar{T}_1(x)= 2 \bar{S}_0(x)$ while $|\Supp(2\bar{S}_0(x))|  \leq |J_2^0|= |I_2^0|$. 
Applying \autoref{Lem:Frenkel}, we find that $2 \bar{S}_0(x) = \bar{0}$ in $\Z[\zeta_p]/\langle 1 - \zeta_p \rangle$. Since $2$ is not divisible by $1 - \zeta_p$ (as noted in \autoref{remark1}), it follows that $1 - \zeta_p$ divides $z_j$ for all $j \in J_2^0$. This implies $\bar{S}_0(x) = 0$ and that $\bar{T}_1(x) = \bar{S}_1(x)$ has $|\Supp(\bar{T}_1(x))| \leq |J_2^1| = |I_2^1|$. However, $(x - \bar{1})^{|I_j^1|}$ divides $\bar{T}_1$, which forces $\bar{T}_1(x) = \bar{0}$ in $\Z[\zeta_p]/\langle 1 - \zeta_p \rangle$ by \autoref{Lem:Frenkel}. Consequently, $1 - \zeta_p$ divides $z_j$ for all $j \in J_2^1$. Since $J = J_2^0 \cup J_2^1$, we conclude that $1 - \zeta_p$ divides $z_j$ for all $j \in J$, contradicting the choice of $r(x)$. 

The proof of the theorem is now complete.
\end{proofofA}

\section{ The case $n=pr $  }\label{Sec:n=pr}

We begin this section  with the definition of $\Gamma_r$ as provided in Section 3 of \cite{Zhang}; interested readers can refer to \cite{Zhang} for further details and motivation.
Denote by $V_n(x_1, x_2, \cdots, x_n)$ the  determinant of the $n\times n$ Vandermonde matrix whose $i, j$-entry is given as $x_i^{j-1}$,  for $1 \leq i, j \leq n$. So, 
\[
V_n(x_1, \cdots, x_n)= \prod_{1 \leq i \leq j \leq n} (x_i - x_j).
\]
For any odd prime $r$ and any  fixed $2 \leq n \leq r-1$, let 
\[
\gamma_n:= \max \Bigl\{ \frac{V_n(a_1, a_2, \cdots, a_n)}{V_n(0,1,\cdots,n-1) } \Big| \, 0 \leq a_1 <a_2 < \cdots <a_n \leq r-1 \Bigr\}.
\]
Then 
\[
\Gamma_r := \max \big\{ \gamma_n \, \big|  \, \, 2 \leq n \leq r-1 \big\}.
\]
Note that $\Gamma_r$ grows at least exponentially on $r$. If we choose $n=\frac{r-1}{2}$ and work with $a_i = 2i$,  for $i=0,1, \cdots  , n-1$,  
it is easy to see that $\Gamma_r$ is at least $2^{n \choose 2}$.
Nevertheless, as it was already noted in \cite{Zhang} (Remark 3.3)   for every prime $r$ there are infinitely many primes $p$ that are primitive in $\Z_r$.

\begin{proofofB}
As in the case of $n=2p$, we need to show that if the numbers $z_j \in \Q(\zeta_n)$ for $j \in J$ satisfy $\sum_{j \in J} z_j \zeta_n^{ij} = 0$ for every $i \in I$, then all $z_j$ must be zero. We can assume without loss of generality that $z_j \in \Z[\zeta_n]$. Note that $\langle 1 - \zeta_p \rangle$ is a prime ideal in $\Z[\zeta_n]$ (by \autoref{Lem:MaximalIdeal}), allowing us to apply \autoref{remark2} in $\Z[\zeta_n]$. Consequently, we obtain the polynomial 
\begin{equation}\label{eq:r(x)-n}
r(x) = \sum_{j \in J} z_j x^{j} \in \Z[\zeta_n][x]
\end{equation}
which vanishes at $\zeta_n^i$ for all $i \in I$, and is such that not all coefficients $z_j$ are  divisible by $1 - \zeta_p$.

Using the notation from \eqref{eq:isomo} and \eqref{eq:zn^i}, we define a polynomial in two variables 
\begin{equation}\label{eq:g(x)-n}
g(x,y) = \sum_{j \in J }z_jx^{j_p}y^{j_r}  \in \Z[\zeta_n][x,y]. 
\end{equation}
We clearly have 
\begin{equation}\label{eq:rootsOfg(x)-n}
g(\zeta_p^{ri_p},\zeta_r^{pi_r}) = \sum_{j \in J }z_j \zeta_p^{ri_pj_p}\zeta_r^{pi_rj_r} = \sum_{j \in J }z_j \zeta_n^{ij}= 0 
\end{equation}
for all $i \in I$.

Consider the sets 
\begin{equation}
  J_r^k = \{ j \in J \mid j_r = k  \},    
\end{equation}
and let  $L \subseteq \{ 0, 1, \cdots, r-1\}$ consisting  of those integers $k$ with $J_r^k \neq \emptyset$. 
By assumption, $|J_r^k|= |I_r^k|$ for all $k \in \{0, 1, \cdots, r-1\}$, so $L$ also identifies  the integers $k$ with $I_r^k \neq \emptyset$. 
Write $L = \{ k_1, k_2,  \cdots, k_{|L|} \}$ so that 
$|I_r^{k_1}| \leq |I_r^{k_2}| \leq \cdots \leq |I_r^{k_{|L|}}|$.

We define the polynomials  
\[
T_t(x) := g(x, \zeta_r^{pt}) \in \Z[\zeta_n][x],
\]  
for each  $t \in L$. Then  
\[
T_t(x) = \sum_{j \in J} z_j  \zeta_r^{ptj_r} x^{j_p} = \sum_{k \in L} \zeta_r^{ptk} \sum_{j \in J_r^k} z_j x^{j_p}. 
\]  
In view of \eqref{eq:rootsOfg(x)-n} we get $
T_t(\zeta_p^{ri_p})= 0$, 
for all $i \in I_r^t$ and thus 
\begin{equation}\label{eq:dividesT}
\prod_{i \in I_r^t} (x- \zeta_p^{ri_p}) \, \big|  \, T_t(x), 
\end{equation}
for every $t  \in L$. Observe that all elements $\{ i_p | i \in I_r^t \}$ are distinct in $\Z_p$, which means $\zeta_p^{ri_p}$ are also distinct $p$-th roots of unity, as $\zeta_p^r$ is a primitive $p$-th root of unity.

For every $k \in L$,  we consider the polynomials    
$S_k(x) := \sum_{j \in J_r^k} z_j x^{j_p}$ of $  \Z[\zeta_n][x]$ and express $T_t(x)$,  as  
\begin{equation}\label{eq:T-S}
T_t(x) = \sum_{k\in L}  \zeta_r^{ptk} S_k(x). 
\end{equation}
This way we have produced the $|L| \times |L|$ system 
\begin{equation}\label{eq:system}
   \left( \begin{matrix}
    T_1(x)\\
    \vdots\\
    T_{|L|}(x)\\
\end{matrix}
\right)
= R \cdot  \left( \begin{matrix}
    S_1(x)\\
    \vdots\\
    S_{|L|}(x)\\
\end{matrix}
\right) 
\end{equation}
with matrix $R= (R_{t,k})=(\zeta_r^{ptk})_{t, k \in L}= (\omega_r^{tk})_{t, k \in L}$,  where $\omega_r= \zeta_r^p$ is also a primitive $r$-th root of unity. 

Now we pass to the quotient $\Z[\zeta_n]/\langle 1- \zeta_p \rangle \cong \F_{p^{r-1}}$. 
Applying the homomorphism 
$\Z[\zeta_n] \to \Z[\zeta_n]/\langle 1- \zeta_p \rangle \cong \F_{p^{r-1}}$ to the coefficients of  all the polynomials involved we get $\bar{T}_t(x)$  and $\bar{S}_k(x)  \in \F_{p^{r-1}}[x]$ satisfying the system 
\begin{equation}\label{eq:systemInF_p}
   \left( \begin{matrix}
    \bar{T}_1(x)\\
    \vdots\\
    \bar{T}_{|L|}(x)\\
\end{matrix}
\right)
= \bar{R} \cdot  \left( \begin{matrix}
    \bar{S}_1(x)\\
    \vdots\\
    \bar{S}_{|L|}(x)\\
\end{matrix}
\right) 
\end{equation}
with matrix $\bar{R}= (\bar{R}_{t,k})=(\bar{\omega}_r^{tk})_{t, k \in L}$.
Observe also that 
\begin{equation}\label{eq:bar{s}}
    \bar{S}_k(x)=\sum_{j \in J_r^k} \bar{z}_j x^{j_p}. 
\end{equation}

Based on \autoref{Lem:z_p-primitive}, the element $\bar{\omega}_r$ in $\Z[\zeta_n]/\langle 1- \zeta_p \rangle $ is a primitive $r$-th root of unity. Therefore, $\bar{R}$ is a square submatrix of $(\bar{\omega}_r^{ij})_{i,j = 0}^{r-1}$ and all the  hypothesis of \autoref{Zhang} are satisfied. Consequently, $\bar{R}$ is invertible and so 
\begin{equation*}
   \left( \begin{matrix}
    \bar{S}_1(x)\\
    \vdots\\
    \bar{S}_{|L|}(x)\\
\end{matrix}
\right)
= \bar{R}^{-1} \cdot  \left( \begin{matrix}
    \bar{T}_1(x)\\
    \vdots\\
    \bar{T}_{|L|}(x)\\
\end{matrix}
\right). 
\end{equation*}
In particular, 
\begin{equation}\label{eq:S_k1}
\bar{S}_{k_1}(x) = \sum_{t \in L} \bar{a}_t \bar{T}_t(x), 
\end{equation}
where $\bar{a}_t \in \Z[\zeta_n]/ \langle 1- \zeta_p \rangle$.  
According to \eqref{eq:dividesT}, $\bar{T}_t(x)$ is divisible by $(x-\bar{1})^{|I_r^t|}$, which means that $(x-\bar{1})^{|I_r^{k_1}|}$ divides $\bar{T}_t(x)$ for every $t \in L$, given that $|I_r^{k_1}|$ is the minimum in the set $\{ |I_r^k| \}_{k \in L}$. Consequently, $(x-\bar{1})^{|I_r^{k_1}|}$ also divides $\bar{S}_{k_1}(x)$. Furthermore, equation \eqref{eq:bar{s}} implies that $|\Supp(\bar{S}_{k_1}(x))| \leq |J_r^{k_1}| = |I_r^{k_1}|$. Using  Frenkel's Lemma, we conclude that $\bar{S}_{k_1}(x) = \bar{0}$. Thus, $z_j$ is divisible by $1-\zeta_p$ for all $j \in J_r^{k_1}$.

Since we have established that $\bar{S}_{k_1}(x)=\bar{0}$, we can deduce from the system \eqref{eq:systemInF_p} that
\begin{equation}
   \left( \begin{matrix}
    \bar{T}_{k_2}(x)\\
    \vdots\\
    \bar{T}_{k_{|L|}}(x)\\
\end{matrix}
\right)
= \bar{D} \cdot  \left( \begin{matrix}
    \bar{S}_{k_2}(x)\\
    \vdots\\
    \bar{S}_{k_{|L|}}(x)\\
\end{matrix}
\right) 
\end{equation}
where $\bar{D}= (\bar{D}_{k_t,k_l})=(\bar{\omega}_r^{k_tk_l})_{k_t, k_l \in L'}$  for $L'= L \setminus \{ k_1 \}$. All hypotheses of \autoref{Zhang} still hold, ensuring that the matrix $\bar{D}$ is invertible. Repeating the previous argument yields $\bar{S}_{k_2} = \bar{0}$, and thus   $z_j$ is divisible by $1 - \zeta_p$ for all $j \in J_r^{k_2}$. By continuing this process and reducing the matrix dimensions by one each time, we conclude that $z_j$ is divisible by $1 - \zeta_p$ for all $j \in \bigcup_{i=1}^{|L|} J_r^{k_i} = J$. This clearly contradicts the choice of $r(x)$ and the proof of the 
theorem is complete.
\end{proofofB}

\begin{remark}
    In \cite{Zhang}, Examples 4.1 and 4.3 demonstrate the necessity of the second condition in \autoref{Zhang}; however, the resulting singular submatrices are not principal.
\end{remark}

\begin{remark}
    Explicit examples of primes that meet the criteria of \autoref{Zhang} and \autoref{Thm:B} were also presented in \cite{Zhang}. For instance, 
    $\Gamma_3= 2$, $\Gamma_5=8$  and $\Gamma_7=75$. This means that, for example,  \autoref{Thm:B} is applicable for $n=3\cdot 5$  (as $5$ is primitive in $\Z_3$ and greater than $\Gamma_3$) but not for $n= 3 \cdot 7$ because on one hand the order of $7$ in $\Z_3^*$ is $1$  and  on the other hand  $3$ is not greater than $\Gamma_7$ 
    although  $3$ is primitive in $\Z_7$.
\end{remark}

\section{An uncertainty principle}\label{Sec:Uncertainty}
In this section we give a generalization of Tao's uncertainty principle. His proof  (see \cite{Tao})
 was based on the following observation: Assume that $G =\Z_n$ is a cyclic group of order $n$ and let 
\( f: G \to \C \) be   a non-zero function. 
If \( |\supp(f)| + |\supp(\widehat{f} \, )| \leq n \), then  
\[ |\supp(f)| \leq n - |\supp(\widehat{f} \, )| = |\{ \lambda \in \widehat{G} \mid \widehat{f} (\lambda) = 0 \}|. \]
Thus, if $ A = \supp(f)\subseteq G=  \Z_p$, there exists a subset $ B \subseteq \widehat G= \Z_p $
such that $|A| = |B| $ and $ \widehat{f}(b) = 0 $ for all $ b \in B $. In particular, the linear map $T: l^2(A) \to l^2(B)$  defined by $T: g \to \widehat{g} |_B $ is singular as $T(f)=0$ and  $f\neq 0$ 
(by  $l^2(A)$ is denoted the set of functions $g: G \to \C$  which are 
$0$ out of $A$).
Observe now that the matrix of $T$ is precisely the submatrix $(\zeta_n^{ji})_{i\in A, j \in B}$ of $(\zeta_n^{kl})_{0 \leq k , l \leq n-1}$. 
Using the same  observation we can show \autoref{Prop:Equivalence} below.

(The notation used is as in the introduction. In particular we write any element of $\Z_r \times \Z_m$ as an integer modulo $rm$ using  the isomorphism $\Z_r \times \Z_m \cong \Z_{rm} $ that maps any vector  $(i_r, i_m)$  to $i\equiv i_rm +i_mr  \pmod {rm}$ and we write   $G_i= \{i\} \times \Z_m$ , for all $i=0, \cdots , r-1$.)

\begin{theorem}\label{Prop:Equivalence}
Let $r, m$ be positive integers with  $(r, m)=1$.
\begin{itemize}
\item[(a)]
Suppose $f : G=\Z_r \times \Z_m \to \C$  is  a non-zero function   satisfying  
\begin{equation}\label{eq:layeredf}
|\supp(f) \cap G_k| + |\supp(\widehat{f} \, ) \cap \widehat{G}_k| \leq m,  \ 
  \text{ for all } k=0,1, \cdots ,r-1.
\end{equation}
If the sets $I \subseteq G$,$J \subseteq \widehat{G}$ satisfy
\begin{equation}\label{eq:layeredIJ}
  |I \cap G_k|= |J \cap \widehat{G}_k|,\ \text{ for all } k=0,1, \cdots ,r-1,  
\end{equation}
and $\supp(f) \subseteq I$, $\widehat{f}|_J =0$ then the matrix $(\zeta_{rm}^{ji})_{i \in I, j \in J}$ is singular. There is at least one such pair $I \subseteq G$,$J \subseteq \widehat{G}$,  for each such $f$.
\item[(b)]
Conversely, if  $I \subseteq G =\Z_r \times \Z_m$ and $J \subseteq \widehat{G} = \Z_r \times \Z_m$  satisfy  \eqref{eq:layeredIJ} 
and the matrix $(\zeta_{rm}^{ji})_{i \in I, j \in J}$ is singular then there exists $f : G=\Z_r \times \Z_m \to \C$  with $\supp(f) \subseteq I$ and $\widehat{f}|_J = 0$ satisfying  \eqref{eq:layeredf}.
\end{itemize}

\end{theorem}

Observe that if  $r=1$  and $m=p$   in \autoref{Prop:Equivalence} we get  the  cyclic $p$-group   $G= \{1\} \times \Z_p$, for which Chebotarev's theorem holds (i.e. all the square submatrices $(\zeta_p^{ij})$ are nonsingular), and  thus we recover Tao's uncertainty principle.

\begin{proof} (a)
Assume first that  $f : G=\Z_r \times \Z_m \to \C$ is a non-zero function satisfying \eqref{eq:layeredf}. If $I, J$ satisfy the hypothesis in (a), then the matrix $(\zeta_{rm}^{ji})_{i \in I, j \in J}$ defines a linear map $T: l^2(I) \to l^2(J)$ that maps any $g \in l^2(I)$ to $\widehat{g}|_J$. Since $T(f) = \widehat{f} |_J = 0$ for the non-zero function $f$ (as $\supp(f) \subseteq I$), the matrix $(\zeta_{rm}^{ji})_{i \in I, j \in J}$ is singular.

To complete the proof of (a) it remains  to show that, for the given $f$,  such a pair $I, J$ 
really exists. For every  $k =0, 1, \cdots , r-1$, define $I_k = \supp(f) \cap G_k$. The fact that 
\[
|\supp(f) \cap G_k| + |\supp(\widehat{f} \, ) \cap \widehat{G}_k| \leq m 
\]
implies that there exist set $J_k \subseteq \widehat{G}_k$ with $|I_k| = |J_k|$ such that $\widehat f (b) = 0$ for all $b \in J_k$. Let $I = \bigcup_{k=0}^{r-1} I_k $ and $J = \bigcup_{k=0}^{r-1} J_k$. Clearly $I = \supp{f}$, $\widehat{f}|_J=0$ and $I, J$ satisfy \eqref{eq:layeredIJ}.

(b) Conversely, suppose $(\zeta_{rm}^{ji})_{i \in I, j \in J}$ is a singular matrix, where $I \subseteq G$ and $ J  \subseteq  \widehat{G}$   satisfy \eqref{eq:layeredIJ}.  The singularity of the matrix implies that the corresponding linear map $T$ maps a nonzero function to zero; that is, there exists a nonzero function $f: \Z_r \times \Z_m \to \C$ such that $\supp(f)  \subseteq I$ and $T( f)  = \widehat{f} |_J= 0$.
Let   $k  \in \{  0,1, \cdots ,r-1\} $, then 
\[
|\supp(f) \cap G_k| \leq  |I \cap G_k |, 
\]
since  $\supp(f)  \subseteq I$. Also 
\[
|\supp(\widehat{f}) \cap \widehat{G}_k| \leq m- |J \cap G_k| = m - |I \cap G_k|, 
\]
where the first inequality follows from the fact that $ |\widehat{G}_k|= m$ and  $\widehat{f}|_{J}= 0$, while the last equality follows  by hypothesis. 
Adding the above we get $|\supp(f) \cap G_k| + |\supp(\widehat{f} \, ) \cap \widehat{G}_k| \leq m$ for all $k=0, 1, \cdots, r-1$.

This completes the proof of the theorem.
\end{proof}

\autoref{Prop:Unc-A} (and similarly \autoref{Prop:Unc-B}) follows directly from \autoref{Prop:Equivalence} and \autoref{Thm:A} (respectively, \autoref{Thm:B}). This is because:

\begin{proofofPropA}
    By  \autoref{Thm:A}, for any $I \subseteq Z_{2p},  J \subseteq \widehat{\Z_{2p}}$ that satisfy \eqref{eq:layeredIJ}, the  matrix $(\zeta_{2p}^{ji})_{i \in I, j\in J}$  is non singular. Hence by the  first part of \autoref{Prop:Equivalence} no non-zero $f:\Z_{2p} \to \C$ satisfies \eqref{eq:layeredf}. The proposition now follows.
\end{proofofPropA}

The proof of \autoref{Prop:Unc-B} is similar.

\section{Complementary matrices} 
\label{Complementary}

Let $A$ be an $n \times n $ matrix and write $[n]$ for the set $\{ 1, 2, \cdots n \}$.  For every $I \subseteq [n]$ we denote by $I^c$ the complementary subset $[n] \setminus I $. If $I, J \subseteq [n]$ with $|I|=|J|$  we write  $A_{I, J}$ for the $|I| \times |I|$ submatrix of $A$ obtained from $A$ by removing all rows whose indices do not belong to $I$ and all columns  whose indices do not belong to $J$. Observe that
$A_{I,J} = A^t_{J,I}$  for all $I, J$.  If $I = J$ we simply write $A_I$ and this is a principal submatrix of $A$.

 It is known,  that a principal submatrix  of a matrix $A$  is non-singular  if and only if its complementary principal submatrix  is also non-singular(see for example Proposition 5.4 in \cite{Brow} for a more general result). We present here a simple proof of this fact based on 
Jacobi's complementary minor theorem, which states:
\begin{theorem}[Jacobi]
   Assume $A $ is an invertible $ n \times n$ matrix over a field $K$ and let $I, J \subseteq [n]$ with $|I|=|J|$. Then 
   \[
   \det (A_{I,J}) = (-1)^{\sum I +\sum J} \cdot \det A \cdot \det( (A^{-1})_{J^c, I^c}).
   \]
\end{theorem}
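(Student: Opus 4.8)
The plan is to reduce everything to the special case $I=J=\{1,2,\ldots,k\}$, where $k:=|I|=|J|$, by permuting rows and columns, and then to settle that case with a single block-matrix identity.

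First I would treat the special case $I=J=\{1,\ldots,k\}$, where $I^c=J^c=\{k+1,\ldots,n\}$ and $\sum I+\sum J=k(k+1)$ is even, so the asserted sign is $+1$. Block-decompose $A$ and $B:=A^{-1}$ as
\[
A=\begin{pmatrix}A_{11}&A_{12}\\ A_{21}&A_{22}\end{pmatrix},\qquad B=\begin{pmatrix}B_{11}&B_{12}\\ B_{21}&B_{22}\end{pmatrix},
\]
with $A_{11},B_{11}$ of size $k\times k$, so that $A_{I,J}=A_{11}$ and $(A^{-1})_{J^c,I^c}=B_{22}$. Reading off the right-hand column blocks of $AB=I_n$ (namely $A_{11}B_{12}+A_{12}B_{22}=0$ and $A_{21}B_{12}+A_{22}B_{22}=I_{n-k}$) yields the identity
\[
A\begin{pmatrix}I_k&B_{12}\\ 0&B_{22}\end{pmatrix}=\begin{pmatrix}A_{11}&0\\ A_{21}&I_{n-k}\end{pmatrix}.
\]
Taking determinants of both sides --- the second factor on the left is block upper triangular with determinant $\det B_{22}$, the right-hand side is block lower triangular with determinant $\det A_{11}$ --- gives $\det A\cdot\det B_{22}=\det A_{11}$, which is exactly the claim in this case.

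For general $I,J$ I would pick permutations $\sigma,\tau$ of $[n]$ that list the elements of $I$, resp.\ $J$, in increasing order in positions $1,\ldots,k$ and the elements of $I^c$, resp.\ $J^c$, in increasing order in positions $k+1,\ldots,n$. Reordering the rows of $A$ by $\sigma$ and its columns by $\tau$ produces a matrix $A'$ with $A'_{ab}=A_{\sigma(a),\tau(b)}$; its leading $k\times k$ block is precisely $A_{I,J}$, and since $(A')^{-1}$ is obtained from $A^{-1}$ by the transposed reordering, its trailing $(n-k)\times(n-k)$ block is precisely $(A^{-1})_{J^c,I^c}$. As $\det A'=\operatorname{sgn}(\sigma)\operatorname{sgn}(\tau)\det A$, applying the special case to $A'$ gives $\det A_{I,J}=\operatorname{sgn}(\sigma)\operatorname{sgn}(\tau)\det A\cdot\det\bigl((A^{-1})_{J^c,I^c}\bigr)$.

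It then remains to identify $\operatorname{sgn}(\sigma)\operatorname{sgn}(\tau)$ with $(-1)^{\sum I+\sum J}$. Writing $I=\{i_1<\cdots<i_k\}$, the inversions of $\sigma$ are exactly the index pairs with one entry in each of the two blocks, and $i_m$ accounts for $i_m-m$ of them, so $\operatorname{sgn}(\sigma)=(-1)^{\sum I-k(k+1)/2}$ and likewise $\operatorname{sgn}(\tau)=(-1)^{\sum J-k(k+1)/2}$; multiplying, the term $k(k+1)$ in the exponent is even and disappears, leaving $(-1)^{\sum I+\sum J}$, as required. The block-matrix identity in the special case is the whole substance of the proof and is immediate; the only mild nuisance --- and not a real obstacle --- is the bookkeeping: fixing permutation-matrix conventions consistently and carrying out the inversion count.
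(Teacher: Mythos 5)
The paper cites Jacobi's complementary minor theorem as a known result and does not prove it, so there is no internal proof to compare against. Your proposal is a correct, self-contained proof. The block identity
\[
A\begin{pmatrix}I_k&B_{12}\\ 0&B_{22}\end{pmatrix}=\begin{pmatrix}A_{11}&0\\ A_{21}&I_{n-k}\end{pmatrix}
\]
does follow directly from the right-hand column blocks of $AB=I_n$, and taking determinants of the block-triangular matrices on each side gives $\det A\cdot\det B_{22}=\det A_{11}$, which is the special case $I=J=\{1,\dots,k\}$ with the correct (trivial) sign. The reduction to this case by row/column permutations is sound: with $A'_{ab}=A_{\sigma(a),\tau(b)}$ one indeed gets $(A')^{-1}_{ab}=(A^{-1})_{\tau(a),\sigma(b)}$, so the trailing block of $(A')^{-1}$ is $(A^{-1})_{J^c,I^c}$ as needed, and $\det A'=\operatorname{sgn}(\sigma)\operatorname{sgn}(\tau)\det A$. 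Finally, your inversion count $\operatorname{sgn}(\sigma)=(-1)^{\sum I-k(k+1)/2}$ (inversions occur only between the two blocks, and $i_m$ contributes $i_m-m$ of them) is correct, and the $k(k+1)$ cancels modulo $2$ in the product. This is essentially the standard proof; the only thing to be careful about, which you already flag, is keeping the permutation-matrix conventions consistent so that the trailing block of $(A')^{-1}$ really is $(A^{-1})_{J^c,I^c}$ (with $J^c$ indexing rows and $I^c$ columns) rather than its transpose.
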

With the use of the adjoint $\adj A$ of A we have  $A^{-1} = \frac{1}{\det A} \cdot \adj A$ and  so  
\[
(A^{-1})_{J^c, I^c}=  \frac{1}{\det A} \cdot (\adj A)_{J^c, I^c}= \frac{1}{\det A} \cdot ((\adj A)^t)_{I^c, J^c}
\]
Hence Jacobi's formula is translated to 
\begin{equation}\label{eq:jacobi-adj}
   \det (A_{I,J})= (-1)^{\sum I +\sum J} \cdot \frac{\det(A)}{\det(A)^{|I^c|}} \cdot \det(((\adj A)^t)_{I^c, J^c}).
\end{equation}

We can now prove 
\begin{proposition}\label{prop:complements}
    Let $A =(\omega^{k,l})_{0 \leq k, l \leq n-1} $ where $\omega$ a primitive $n$-th root of unity. For any  $I, J  \subseteq [n]$ with $|I|=|J|$ the submatrix $A_{I, J}$ is invertible if and only if $A_{I^c,J^c} $ is invertible.  In particular, for  $I=J$ we have that the principal minor $\det{A_I}$ is non zero if and only if the principal minor $\det{A_{I^c}}$ is non zero. 
\end{proposition}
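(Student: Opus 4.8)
The plan is to apply Jacobi's complementary minor theorem, in the adjoint form recorded in equation \eqref{eq:jacobi-adj}, to the specific matrix $A = (\omega^{kl})_{0 \le k,l \le n-1}$. The key observation is that this $A$ is, up to a harmless scaling, its own adjoint transpose: since $A$ is (a scalar multiple of) a unitary-type Fourier matrix, one checks that $A \overline{A}^{\,t} = n \cdot I$ where $\overline{A}$ is the entrywise complex conjugate, and $\overline{A} = (\omega^{-kl}) = (A_{kl})$ reindexed. More precisely, $A$ is symmetric, so $A^t = A$, and $A^{-1} = \tfrac{1}{n}\overline{A}$, which gives $\adj A = \det(A)\, A^{-1} = \tfrac{\det A}{n}\,\overline{A}$. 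Thus $(\adj A)^t = \tfrac{\det A}{n}\,\overline{A}^{\,t} = \tfrac{\det A}{n}\,\overline{A}$, and the submatrix $((\adj A)^t)_{I^c,J^c}$ is a nonzero scalar multiple of $\overline{A}_{I^c,J^c}$.

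The steps, in order, are as follows. First, I would record that $A$ is invertible — indeed $\det A \ne 0$, as $A$ is a Vandermonde matrix in the distinct points $1, \omega, \dots, \omega^{n-1}$ — so that Jacobi's theorem applies. Second, I would establish the identity $(\adj A)^t = c\,\overline{A}$ for the explicit nonzero constant $c = \det(A)/n$, using symmetry of $A$ and the relation $A^{-1} = \tfrac1n\overline A$. Third, plugging into \eqref{eq:jacobi-adj}, I get
\[
\det(A_{I,J}) = (-1)^{\sum I + \sum J}\cdot \frac{\det A}{(\det A)^{|I^c|}}\cdot c^{\,|I^c|}\cdot \det(\overline{A}_{I^c,J^c}),
\]
and since the prefactor $(-1)^{\sum I+\sum J}\,(\det A)^{1-|I^c|}\,c^{|I^c|}$ is a nonzero complex number, $\det(A_{I,J}) \ne 0$ if and only if $\det(\overline{A}_{I^c,J^c}) \ne 0$. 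Fourth, I would note that $\overline{A}_{I^c,J^c}$ is the entrywise conjugate of $A_{I^c,J^c}$, hence has nonzero determinant exactly when $A_{I^c,J^c}$ does (complex conjugation is a field automorphism of $\C$). Combining, $A_{I,J}$ is invertible if and only if $A_{I^c,J^c}$ is invertible. Taking $I = J$ gives the statement about principal minors immediately.

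I do not expect a serious obstacle here; the argument is essentially bookkeeping once Jacobi's theorem is in hand. The one point requiring a little care is the chain of scalar identities — keeping track of $\det A$ versus its powers $(\det A)^{|I^c|}$, the factor $c = \det(A)/n$ raised to the power $|I^c|$, and the sign $(-1)^{\sum I + \sum J}$ — and making sure the net scalar is verifiably nonzero rather than possibly $0/0$. Since $\det A \ne 0$ and $n \ne 0$ in $\C$, every factor in the product is a unit, so no cancellation or indeterminacy arises. A secondary but trivial point is confirming $\overline{\omega}$ is again a primitive $n$-th root of unity so that $\overline A$ is literally of the same form as $A$; this makes the "if and only if" symmetric and is not strictly needed for the proof, only reassuring.
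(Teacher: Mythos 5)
Your proof is correct and takes essentially the same approach as the paper: both invoke Jacobi's complementary minor theorem together with the relation $A\bar{A}^{\,t}=nI_n$ (equivalently $(\adj A)^t=\tfrac{\det A}{n}\bar A$) to express $\det(A_{I,J})$ as a nonzero scalar times $\overline{\det(A_{I^c,J^c})}$. The only cosmetic difference is that you pass through the symmetry $A^t=A$ to write $A^{-1}=\tfrac1n\bar A$, whereas the paper derives $\det(A)\bar A = n(\adj A)^t$ directly from $A\bar A^t=nI_n$; these are the same identity.
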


\begin{proof}
The matrix $A$ is the character table of the cyclic group $C_n$ of order $n$ and as such it is invertible and satisfies     
\[
A \cdot \bar{A}^t= n \cdot I_n. 
\]
This along with the formula for the adjoint $\adj A$ of $A$ implies that  $\det(A)  \cdot \bar{A} = n \cdot (\adj A)^t$ and thus 
\[
 det(A)  \cdot (\bar{A})_{I^c,J^c} = n \cdot ((\adj A)^t)_{I^c,J^c}   
\]
for any $I, J  \subseteq [n]$ with $|I|=|J|$. If  $k = n-|I|$, taking  determinants to the above equation we get  
\begin{equation}\label{eq:Det-adj}
(\det(A))^k \cdot \det((\bar{A})_{I^c,J^c}) = n^k \cdot \det(((\adj A)^t)_{I^c,J^c})
\end{equation}
Clearly $\det((\bar{A})_{I^c,J^c})=\overline{ \det(A_{I^c,J^c})} $ and  we substitute to Jacobi's formula  \eqref{eq:jacobi-adj} to  get   
\[
\det (A_{I,J})=(-1)^{\sum I +\sum J}\cdot \frac{\det(A)}{(\det(A))^{k}} \cdot \frac{(\det(A))^k}{n^k} \cdot \overline{ \det(A_{I^c,J^c})} =
(-1)^{\sum I +\sum J} \cdot  \frac{\det(A)}{n^k} \cdot \overline{ \det(A_{I^c,J^c})}.
\] 
Hence $\det (A_{I,J}) \neq 0 $  if and only if $\det(A_{I^c,J^c})) \neq 0 $  and the proposition follows.

\end{proof}

{\bf Acknowledgment of priority }
Proposition 3.6 in \cite{Goetz} contains a similar proof of \autoref{prop:complements} above. I thank G. Pfander for pointing out this reference.

\end{document}